\newtheorem{theorem}{Theorem}
\newtheorem{lemma}{Lemma}
\newtheorem{remark}{Remark}
\begin{document}
	
\begin{center}
	\textbf{{\Large Branching Random Walks and their Applications for Epidemic Modelling}}
\end{center}

\begin{center}
	\textit{{\large Elizaveta Ermakova, Polina Makhmutova,  Elena Yarovaya}}
\end{center}

\begin{abstract}
Branching processes are widely used to model the  viral epidemic evolution. For more adequate investigation of viral epidemic modelling, we suggest to apply branching processes with transport of particles usually called branching random walks (BRWs). This allows to investigate not only the number of particles (infected individuals), but also their spatial spread. We consider two models of continuous-time BRWs on a multidimensional lattice in which the transport of infected individuals is described by a symmetric random walk on a multidimensional lattice whereas the processes of birth and death of infected individuals are represented by a continuous-time Bienayme-Galton-Watson processes at the lattice points (branching sources). A special attention is paid to the properties of branching random walks with one branching source on the lattice and finitely or infinitely many initial particles. We  show that there exists a kind of duality between the branching random walk  with a finite number of initial particles and the branching random walk with an infinite number of initial particles, which is associated with the possibility of their twofold description. The fact of duality  is useful from the biological point of view. Each of the models can be considered taking into account the vaccination process. We suppose the vaccination to be a proportion of immune individuals in the population, who are resistant to disease. For simplicity, in all our BRW models, we assume that the vaccination process does not depend on time, what allows to investigate spatial properties of viral evolution.
\end{abstract}

\section{Introduction}

Branching processes are widely used in investigating the viral epidemic evolution~\cite{GMS-B:10,BGMS-B:10,FF:14,W:14}. In the work, we suggest to apply for viral epidemic modelling an important class of stochastic processes, the branching processes with the transport of particles which is commonly called the branching walks or branching random walks (BRWs). This allows us to investigate not only behavior of the particle population, but also its spatial distribution. For this, we consider a few models of continuous-time BRWs on the multidimensional lattice $\mathbb{Z}^d$, $d\ge 1$. In these models, the transport of particles (infected individuals or virus particles) is described by a symmetric random walk on $\mathbb{Z}^d$, see, e.g.,~\cite{YarBRW:r}. Processes of birth and death of particles are represented by a continuous-time Bienayme-Galton-Watson processes at the lattice points, called branching sources, see, e.g.,~\cite{YarBRW:r}.
A special attention will be paid to the properties of two BRW models with a single branching source on $\mathbb{Z}^d$, one of these models with a finite and another with an infinite number of initial particles. Some results about connection between these  models are presented.

We will show that there exists a kind of duality between the BRW  with a finite number of initial particles and the BRW with an infinite number of initial particles which is associated with the possibility of their twofold description, see Sections~\ref{BRWI} and~\ref{BRWIF}, and which is useful from biological point of view.
The transition from the model with a finite number of initial particles to another model, with an infinite number of particles, gives an opportunity to describe the behavior of some infected populations in a more natural way. For example, in connection with this it is worth to mention the phenomenon called virus persistence~\cite{RG:17,BAP:96} according to which some viruses can get into a latent state, weakly interfering in cell processes and become active only under specific conditions. Such a state, called latency, is typical for herpes viruses, in particular the varicella or Epstein-Barr virus~\cite{BAP:96}, which results in infectious mononucleosis, see, e.g.,~\cite{VC:04}. The latent stage is also  presented in the construction of propagation strategy of some bacteriophages~\cite{CMLH:11}. Here, until the infected cell remains in an unfriendly environment, virus does not kill it, inherits by filial cells and integrates into the cell’s genome. But when the infected bacteria enters a favorable environment, the infectious agent captures the control over cell’s processes and it begins to produce materials for building new viral particles. Thousands of them move out of the cell, break the membrane, leading the cell’s death.
With virus persistence (for example, papovaviruses) some oncological diseases are related, see, e.g.,~\cite{B:96}.

In the above mentioned BRW model with an infinite number of initial particles every particle at a point of the lattice at initial time could present a virus in the latent stage. Then the branching source is a place with a favorable environment for the disease progression. Getting into it, the virus enter the active stage from the latency, so the transmission and the infection of other members of the population become possible.

Both of the BRW models can be considered taking into account the vaccination process. Unlike to~\cite{GMS-B:10}, we suppose the vaccination to be presented by a reduced level of replication possibility of virus particles. For simplicity, in both BRW models, we assume that the vaccination process does not depend on time, in contrast, e.g., with~\cite{BGMS-B:10}, what allows to investigate spatial properties of the viral evolution.

The structure of the work is as follows. In this paper we consider only BRWs with a single branching source. In Section~\ref{BRWI}, a formal description of a BRW with an infinite number of initial particles is recalled and the concept of a particle subpopulation is introduced. Besides, in Section~\ref{BRWI} the main differential and integral equations for the generating functions and the moments of the subpopulation particle number, as well as for the moments of a particle  number of every subpopulation at an arbitrary lattice point, are obtained.  In Section~\ref{BRWIF} we establish the duality of the BRW with an infinite number of initial particles and the BRW  with one initial particle situated at an arbitrary lattice point. This result allows us to apply the theorems obtained earlier for a well-known model with one initial particle (see, e.g.,~\cite{YarBRW:r}), for studying the BRW with an infinite number of particles and a finite variance of jumps, see Theorem~\ref{T3}. Let us point out that the assumption about the finiteness of the variance of jumps is used only in Theorem~\ref{T3}. At last, in Section~\ref{BRWV} we generalize both BRW models for possible applications taking into account the vaccination process.

\section{BRW with infinitely many initial particles}\label{BRWI}

We consider a continuous-time branching random walk on $\mathbb{Z}^d$, $d\ge 1$, with an infinite number of initial particles. The process of birth and death of particles occurs only at a single lattice point called the branching source. Without loss of generality, we can assume that the source is at the origin. Let just one particle be at every point $x\in \mathbb{Z}^d$ at instant $t=0$. Informal description of the model is rather simple. Being outside of the source every particle can walk on $\mathbb{Z}^{d}$ until
reaching the source. At the source it spends an exponentially distributed time
    and then either jumps to a point $x'\neq 0$
or can give a random number of offsprings (we assume that the
particle itself is included in this number; in this case, when the number of offsprings equals zero we say that the
particle dies). The newborn particles behave independently and
    stochastically in the same way as a parent individual.

    Denote by $\eta_{t}(y)$ the number of particles at the point $y\in\mathbb{Z}^d$ at time $t$. Then due to the assumption that there is just one particle at every point $x\in\mathbb{Z}^d$ at instant $t=0$ we have that the function $\eta_{t}(y)$ satisfies the following initial condition: $\eta_{0}(y)=1$ for all $y\in\mathbb{Z}^{d}$.
In this model we also divide all the particles on the lattice into subpopulations, that is, we assume that each initial particle at a point $x\in \mathbb{Z}^{d}$ is a progenitor of its separate subpopulation $\eta_{x,t}$ on $\mathbb{Z}^{d}$ at time $t$. Denote by $\eta_{x, t}(y)$  the number of particles from each subpopulation at time $t$ at the point $y\in\mathbb{Z}^d$ with the initial position of the progenitor particle at the point $x$. It satisfies the initial condition $\eta_{x, 0}(y)=\delta_{x}(y)$. Thus, we have  for every $y\in\mathbb{Z}^{d}$ the equality $\eta_{t}(y)=\sum_{x\in \mathbb{Z}^d}\eta_{x, t}(y)$ and for very $x \in\mathbb{Z}^{d}$ the equality $\eta_{x,t}=\sum_{y\in \mathbb{Z}^d}\eta_{x, t}(y)$ (subpopulation size on $\mathbb{Z}^{d}$).



The random walk of a particle on $\mathbb{Z}^{d}$ is defined by an infinitesimal matrix of transition intensities
$A=\bigl(a(x,y)\bigr)_{x,y\in \mathbb{Z}^{d}}$,
satisfying
$\sum_{y \ne x}a(x,y)=|a(x,x)|<\infty$ for all~$x$, where $a(x,y)\ge 0$
for $x\ne y$ and $a(x,x)<0$. We assume that the intensities $a(x,y)$ are symmetrical and homogeneous in space: $a(x-y):=a(x,y)=a(y,x)=a(0,y-x)$.
Furthermore, we suppose,
that the random walk is irreducible, that is, for each
$z\in\mathbb{Z}^{d}$ there exist
$z_{1},\dots,z_{k}\in\mathbb{Z}^{d}$, such that $z=\sum_{i=1}^{k}z_{i}$ and
$a(z_{i})\ne0$ for $i=1,\dots,k$. It means  that  each point $ y\in\mathbb{Z}^d $ is reachable.

We assume that for any small time $h$ the particle jumps from the point $x$  to the point $y\ne x$
with probability
\begin{equation}\label{E1}
p(h,x,y)=a(x,y)h+o(h),
\end{equation}
or remains in place with probability
\begin{equation}\label{E2}
p(h,x,x)=1+a(x,x)h+o(h).
\end{equation}

This implies that $p(t,x,y)$, with $y\in\mathbb{Z}^{d}$ treating as a parameter, satisfies the system of backward Kolmogorov equations:
\begin{equation}\label{E:Kolm}
\partial_tp(t,x,y) = \sum_{x'}a(x-x')p(t,x',y),\quad
p(0,x,y)=\delta_y(x),
\end{equation}
where $\delta_y(\cdot)$ is the discrete Kronecker $\delta$-function on $\mathbb{Z}^d$.

By introducing the linear operator $\mathscr{A}$ on $l^{p}(\mathbb{Z}^{d})$ for every $p$, $1 \leq p \leq \infty$, as follows
\[
(\mathscr{A}u)(x):=\sum_{x'}a(x,x')u(x')\equiv \sum_{x'}a(x-x')u(x'), \quad u(\cdot)\in l^{p}(\mathbb{Z}^{d}),~x\in\mathbb{Z}^{d},
\]
we can rewrite the Kolmogorov's equations~\eqref{E:Kolm} in the operator form:
$$
\partial_tp(t,x,y) = (\mathscr{A}p(t,\cdot,y))(x),\quad
p(0,x,y)=\delta_y(x),
$$
where $y$ is treated as a parameter.

The branching mechanism is defined by a continuous-time Bienayme-Galton-Watson process (see, e.g.,~\cite{Sev}) which is determined by the following infinitesimal generating function
\begin{equation}\label{pf}
f(u):=\sum_{n=0}^{\infty}b_nu^n,\quad 0\le u\le 1,
\end{equation}
where $b_n\ge0$ for $n\neq 1$, $b_1<0$ and $\sum_nb_n=0$.


By $p(h, n)$ we denote the probability to produce $n\neq 1$ offspring particles by the progenitor particle situated at a point $x\in \mathbb{Z}^{d}$ in a small time $h$ under condition $\eta_{x, 0}(y)=\delta(y-x)$:
\begin{align}\label{pn}
p(h, n) &= b_nh + o(h)\quad \text{for}\quad  n\neq 1,\\
p(h, 1) &= 1 + b_1h + o(h).
\end{align}

We assume also that there exists the first derivative of the generating function $\beta_{1} := f'(1)< \infty $, that is, the first moment of the direct particle offsprings is finite. Denote in this case $\beta := \beta_{1}$. The assumption that all moments are finite, that is, $\beta_{r} := f^{(r)}(1)< \infty$ for all $r$, is used in the method-based proofs of the limit theorems on behavior of the numbers of particles in BRW (see, e.g.,~\cite{YarBRW:r}).

Now, let us combine the branching and walking mechanisms in order to get a branching random walk with the source at $x=0$. Suppose that the particles may be at the source as well as at an arbitrary lattice point. Then at time $t$ the particles in small time $h$ may jump to a point $y\neq x$ with the probability
\begin{equation}\label{E:phxy}
p(h, x, y) = a(x, y)h + o(h)
\end{equation}
or, as earlier, it will stay at the point $x$ with the probability
\begin{equation}\label{E:phxx}
p(h,x,x)=1 +a(x,x)h + o(h)= 1 +a(0,0)h + o(h),
\end{equation}
and in this last case (staying at the source $x=0$) the particle can produce also $n\neq 1$ offsprings or die (case with $n=0$) with the probability
\begin{equation}\label{E:phn}
p_{*}(h, n) = p(h,0,0)\cdot p(h,n)=b_nh + o(h),
\end{equation}
or remain unchanged with the probability
\begin{equation}\label{E:ph1}
p_*(h, 1)=p(h,0,0)\cdot p(h,1) = 1+ a(0,0)h +b_{1}h + o(h).
\end{equation}

As a result, each particle stays at the source for a random time, exponentially distributed with the parameter $-(a(0,0) + b_{1})$, after which it produces a random number of offsprings or jumps at some other point. Each of the particles evolves independently of others by the same law.

Denote as usual by $E$ the expectation of a random variable.
In the sequel  the main subject of our interest will be the integer moments of the numbers of particles $\eta_{t}(y)$, $\eta_{x,t}$, and $\eta_{x, t}(y) $  at the point $y$, respectively:
\[
M_{\infty, n}(t, y):=E\eta_{t}^{n}(y),\quad M_{n}(t, x):=E\eta_{x, t}^{n},\quad
M_{n}(t, x, y):=E\eta_{x, t}^{n}(y).
\]

Introduce the operator
\[
\mathscr{H}:= \mathscr{A} + \beta\Delta_{0},
\]
where $\Delta_{0}$ is the following operator on $l^{p}(\mathbb{Z}^{d})$ for every $p$, $1 \leq p \leq \infty$:
\[
(\Delta_{0}u)(x):=\delta_{0}(x)u(x)\equiv u(0)\delta_{0}(x), \quad u(\cdot)\in l^{p}(\mathbb{Z}^{d}),~x\in\mathbb{Z}^{d}.
\]
Introduce also the Laplace generating functions for random variables $\eta_{t}(y) $, $\eta_{x,t}$, and $\eta_{x, t}(y) $, defined by the following equations
\[
F_{\infty}(z; t, y) := Ee^{-z\eta_{t}(y)},\quad F_{1}(z; t, x) := Ee^{-z\eta_{x, t}}, \quad
F_{1}(z; t, x, y) := Ee^{-z\eta_{x, t}(y)},
\]
where  $z \geq 0$, the index $\infty$ indicates the generating functions for the whole particle population at $y\in \mathbb{Z}^{d}$, and the index $1$ indicates the generating functions for subpopulations of particles both over  $\mathbb{Z}^{d}$ and at  $y\in \mathbb{Z}^{d}$
generated by the parent particle with the initial location at $x \in \mathbb{Z}^{d}$.

 Now we focus on the derivation of the backward Kolmogorov differential equations for the subpopulations.
\begin{lemma}\label{L1} For each $ 0\leqslant z \leqslant \infty $ the generating functions $F_{1}(z; t, x) $ and $ F_{1}(z; t, x, y) $ are continuously differentiable with respect to $t$ uniformly in $ x,y\in \mathbb{Z}^d$, satisfy the inequalities $ 0\leqslant F_{1}(z; t, x), F_{1}(z; t, x, y) \leqslant 1 $ and the backward Kolmogorov differential equations
\begin{align}\label{E1.1}
\partial_{t}F_{1}(z; t, x) &= (\mathscr{A}F_{1}(z; t, \cdot))(x) + \delta_{0}(x)f(F_{1}(z; t, x)),\\
\label{E1.2}
\partial_{t}F_{1}(z; t, x, y) &= (\mathscr{A}F_{1}(z; t, \cdot, y))(x) + \delta_{0}(x)f(F_{1}(z; t, x, y)),
\end{align}
with the initial conditions $ F_{1}(z; 0, y) = e^{-z} $ and $ F_{1}(z; 0, x, y) = e^{-z\delta_{x}(y)}$, respectively.
\end{lemma}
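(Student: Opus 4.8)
The plan is to run a first-step (first-jump) analysis over a short time interval $[0,h]$, using the Markov property of the branching random walk together with the spatial and temporal homogeneity of the underlying walk, and then to pass to the limit $h\to 0$. The bounds $0\leqslant F_{1}(z;t,x),F_{1}(z;t,x,y)\leqslant 1$ are immediate: for $z\geqslant 0$ and a nonnegative integer-valued random variable $\eta$ one has $e^{-z\eta}\in[0,1]$, while for $z=\infty$ the quantity $e^{-z\eta}$ is read as $1$ when $\eta=0$ and $0$ otherwise, still in $[0,1]$; taking expectations preserves the inequalities. That these generating functions refer to genuine (a.s.\ finite) random variables follows from $\beta=f'(1)<\infty$, which forces the first moments to obey a linear equation and hence to be finite.

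For \eqref{E1.1}, fix $z,t$ and consider the increment from $t$ to $t+h$. Conditioning on the configuration of the subpopulation at time $h$ and invoking the Markov property and time-homogeneity re-expresses the state at $t+h$ through independent copies of the process of duration $t$. If $x\neq 0$, no branching can occur at the starting site, so in time $h$ the progenitor either stays at $x$ with probability $1+a(0,0)h+o(h)$ (here $a(x,x)=a(0,0)$ by homogeneity) or jumps to some $x'\neq x$ with probability $a(x,x')h+o(h)$; conditioning gives
\begin{equation*}
F_{1}(z;t+h,x)=\bigl(1+a(0,0)h\bigr)F_{1}(z;t,x)+h\sum_{x'\neq x}a(x,x')F_{1}(z;t,x')+o(h).
\end{equation*}
Since $\sum_{x'\neq x}a(x,x')=|a(0,0)|<\infty$ uniformly in $x$ and all generating functions are bounded by $1$, the series converges and the $o(h)$ term is uniform in $x$; rearranging and using $a(0,0)F_{1}(z;t,x)+\sum_{x'\neq x}a(x,x')F_{1}(z;t,x')=(\mathscr{A}F_{1}(z;t,\cdot))(x)$ yields, after dividing by $h$ and letting $h\to 0$, equation \eqref{E1.1} with the $\delta_{0}(x)$-term absent. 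If $x=0$, the particle may in addition produce $n\neq 1$ offspring with probability $b_{n}h+o(h)$, each independently starting a fresh subpopulation from the origin, so the contribution of this event to the Laplace transform is $F_{1}(z;t,0)^{n}$; collecting terms and using $\sum_{n}b_{n}F_{1}(z;t,0)^{n}=f(F_{1}(z;t,0))$ produces the extra summand $\delta_{0}(x)f(F_{1}(z;t,x))$, which is exactly \eqref{E1.1}.

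The derivation of \eqref{E1.2} is verbatim the same: counting only particles located at $y$ rather than all particles does not affect the branching/walking bookkeeping, and when the source particle splits into $n$ offspring the relevant factor is again $F_{1}(z;t,0,y)^{n}$. The initial conditions are read off directly: $\eta_{x,0}=1$ gives $F_{1}(z;0,x)=e^{-z}$, and $\eta_{x,0}(y)=\delta_{x}(y)$ gives $F_{1}(z;0,x,y)=e^{-z\delta_{x}(y)}$. Finally, the right-hand sides of \eqref{E1.1}--\eqref{E1.2} are continuous in $t$ uniformly in $x,y$ (again because $\sum_{x'}|a(x,x')|$ is finite and independent of $x$, the $F_{1}$ are bounded by $1$, and $f$ is continuous on $[0,1]$), so the difference quotients converge uniformly and $F_{1}$ is continuously differentiable in $t$ uniformly in $x,y$.

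I expect the main obstacle to be the bookkeeping of the error terms and the uniformity claims, in particular justifying uniformly in $x$ that the infinite sum over target sites contributes only an $o(h)$ correction beyond the displayed linear term, and upgrading ``differentiable'' to ``continuously differentiable uniformly in $x,y$''. A clean alternative, if one prefers to avoid delicate $o(h)$ estimates, is to first derive the equivalent mild (integral) form by conditioning on the first holding time at the starting site --- obtaining a renewal-type equation whose kernel involves $p(t,x,y)$ from \eqref{E:Kolm} --- establish boundedness and continuity there, and only then differentiate; this is the route usually taken in \cite{YarBRW:r}.
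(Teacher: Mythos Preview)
Your approach is essentially the same as the paper's: a first-step analysis over $[0,h]$ using the Markov property, followed by passage to the limit $h\to 0$, with the bounds $0\le F_{1}\le 1$ read off directly and uniform continuous differentiability deduced from boundedness of the right-hand side. The technical point you correctly flag as the main obstacle---uniform-in-$x$ control of the aggregate $o(h)$ error from the infinite sum over jump targets---is exactly what the paper spends most of its effort on, handling $\sum_{x'\neq x}\bigl|p(h,x,x')/h - a(x,x')\bigr|$ by a truncation argument (split at $|x'-x|\le K$, use the tail of the convergent series $\sum a(x,x')$ for the far part, and finitely many pointwise $o(h)$ estimates for the near part); your one-line justification that ``the series converges and the $o(h)$ term is uniform'' is not quite enough as written, since the individual $o(h)$'s in $p(h,x,x')=a(x,x')h+o(h)$ are not a priori summable, but you have already identified this as the place needing care.
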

\begin{proof} The proof of \eqref{E1.1} is by analogy with \cite[Lemma~1.2.1]{YarBRW:r}. We confine ourselves to proving the statement of the lemma related to $F_{1}(z; t, x, y)$.

The inequality $ 0\leqslant F_{1}(z; t, x, y) \leqslant 1 $ results from nonnegativeness of the random variable $\eta_{t}(y)$ and the definition of the generating function.
Taking into account all possible evolutions of the system on the interval $[t, t+h]$ and using the Markov property we obtain:
\begin{align*}
F_{1}(z; t+h, x, y) &= \sum_{x'\neq x}p(h, x, x')E e^{-z \tilde{\eta}_{t}(y)}+ \delta_{0}(x) \sum_{n\neq 1}p_{*}(h, n)Ee^{-z(\tilde{\eta}_{t}^{1}(y) + \cdots + \tilde{\eta}_{t}^{n}(y))}\\
&\quad+ [(1-\delta_{0}(x))p(h, x, x) + \delta_{0}(x)p_{*}(h, 1)]F_{1}(z; t, x, y)\\
&=  \sum_{x'\neq x}a(x, x')hE e^{-z \tilde{\eta}_{t}(y)} + \delta_{0}(x) \sum_{n\neq 1}b_{n}hE e^{-z(\tilde{\eta}_{t}^{1}(y) + \cdots + \tilde{\eta}_{t}^{n}(y))}\\
&\quad+ [1 + a(x, x)h + \delta_{0}(x)b_1 h]F_{1}(z; t, x, y)\\
&\quad + S_1(h) + S_2(h) + S_3(h) + S_4(h)\\
&= F_{1}(z; t, x, y) + [(\mathscr{A}F_{1}(z; t, \cdot, y))(x) + \delta_{0}(x)f(F_{1}(z; t, x, y))]h\\
&\quad+ S_1(h) + S_2(h) + S_3(h) + S_4(h),
\end{align*}
where
\begin{align*}
S_1(h) &= \sum_{x'\neq x}[p(h, x, x') - a(x, x')h ]E e^{-z \tilde{\eta}_{t}(y)}x, \\
S_2(h) &= \delta_{0}(x)\sum_{n\neq 1}[p_{*}(h, n) - b_n h ]E_{\infty}e^{-z(\tilde{\eta}_{t}^{1}(y) + \cdots + \tilde{\eta}_{t}^{n}(y))},\\
S_3(h) &= [p(h, x, x) - 1- a(x, x)h]F_{1}(z; t, x, y),\\
S_4(h) &= \delta_{0}(x) [p_{*}(h, 1) - 1 - b_1 h]F_{1}(z; t, x, y).
\end{align*}
Here $ \tilde{\eta}_{t}(y) $ is a new process, starting at time $ t=h $ at the point $ x' $, which the particle reaches by a jump from the point $ x $, and
$\tilde{\eta}_{t}^{1}(y), \ldots , \tilde{\eta}_{t}^{n}(y) $ are independent processes born in time $h$, produced by $n$ offsprings of the initial particle.

From above, it follows that
\begin{multline}\label{E1.3}
\left| \frac{ F_{1}(z; t+h, x, y) -  F_{1}(z; t, x, y)} {h}  - (\mathscr{A}F_{1}(z; t, \cdot, y))(x) + \delta_{0}(x)f(F_{1}(z; t, x, y)) \right| \\
\leqslant \sum_{x'\neq x} \left|  \frac{p(h, x, x')}{h} - a(x, x') \right|  + \sum_{n\neq 1} \left|  \frac{p_{*}(h, n)}{h} - b_n \right| \\
+ \left| \frac{p(h, x, x)}{h} - \frac{1}{h} - a(x,x) \right|  + \left| \frac{p_{*}(h, 1)}{h} - \frac{1}{h} -a(0,0) - b_1 \right|.
 \end{multline}

The last two summands on the right side of equation~\eqref{E1.3} tend to zero when $h \to 0$ in view of~\eqref{E:phxx} and~\eqref{E:ph1}. Fix an arbitrary $ K > 0 $ and estimate the first summand on the right side of equation~\eqref{E1.3}:
\begin{multline}\label{I1}
\sum_{x'\neq x} \left|  \frac{p(h, x, x')}{h} - a(x, x') \right|
\leqslant \sum_{0 < | x' - x| \leqslant K} \left|  \frac{p(h, x, x')}{h} - a(x, x') \right|\\  + \sum_{| x' - x| > K} a(x, x')  + \sum_{| x' - x| > K} \frac{p(h, x, x')}{h}.
\end{multline}

Rewrite now the last summand on the right side of~\eqref{I1}:
\begin{multline}\label{AE1}
\sum_{| x' - x| > K} \frac{p(h, x, x')}{h} = \frac{1 - p(h, x, x)}{h} - \sum_{0 < | x' - x| \leqslant K}  \frac{p(h, x, x')}{h} \\
= \left( \frac{1 - p(h, x, x)}{h} + a(x, x) \right) - \sum_{0 < | x' - x| \leqslant K} \left(  \frac{p(h, x, x')}{h} - a(x, x') \right)\\
-  a(x,x) -  \sum_{0 < | y - x| \leqslant K} a(x, y)
\end{multline}

Substituting $a(x,x)=-\sum_{y\neq x}a(x,y)$ in~\eqref{AE1}, we finally estimate~\eqref{I1}:

\begin{multline*}
\sum_{x'\neq x} \left|  \frac{p(h, x, x')}{h} - a(x, x') \right| \leqslant 2 \sum_{0 < | x' - x| \leqslant K} \left|  \frac{p(h, x, x')}{h} - a(x, x') \right|  + 2 \sum_{| x' - x| > K} a(x, x')\\
+  \left|  \frac{1-p(h, x, x)}{h} + a(x, x) \right|.
\end{multline*}
Here the last summand tends to zero when $ h \to 0 $ by virtue of the relation~\eqref{E:phxx}; the second sum could be made arbitrary small by choosing sufficiently large  $K$ (since the series $\sum_{x'}a(x, x')$ converges absolutely); with the fixed $K$ the first sum tends to zero when $ h \to 0 $ (since the number of summands is finite and each summand tends to zero when $ h \to 0 $ due to the relation~\eqref{E:phxy}).

Similarly we prove that the second sum in equation~\eqref{E1.3} tends to zero. As a result, it is proved that $ F_{1}(z; t, x, y) $ is differentiable with respect to $t$ (and also continuous) for each $z, x, y$ and satisfies the differential equation~\eqref{E1.2}.

Now the inequalities $ 0\leqslant F_{1}(z; t, x, y) \leqslant 1 $, equation~\eqref{E1.1} and the boundedness of the operators $\mathscr{A}$ and $ \delta_0 $ in the space $l^{\infty}(\mathbb{Z}^d)$ (by the Schur's lemma) lead~\cite{Hartman82} to the uniform boundedness with respect to  $z, x, y $ and continuity of the derivative with respect to $t$ of the function $F_{1}(z; t, x, y) $. And this results in uniform with respect to $z, x, y$ continuous differentiability of the function $ F_{1}(z; t, x, y)$ with respect to $t$.
\end{proof}

\begin{remark}\label{Rem1}\rm
Owing to the uniform with respect to $ y\in \mathbb{Z}^d $ continuous differentiability of the function $ F_{1}(z; t, x) $ with respect to $t$ (for each $ 0\leqslant z \leqslant \infty $), provided by  Lemma~\ref{L1}, equation~\eqref{E1.1} can be treated as the Cauchy problem in the Banach space $l^{\infty}(\mathbb{Z}^d)$ dependent on the parameter $z$:
\begin{equation}\label{E1.4}
 \frac{dF_{1}(z; t, \cdot)}{dt} = \mathscr{A}F_{1}(z; t, \cdot) + \Delta_{0}f(F_{1}(z; t, \cdot))
\end{equation}
with the initial condition $ F_{1}(z; 0, \cdot) = e^{-z} $.

Similarly, since by Lemma~\ref{L1} the function $ F_{x}(z; t, x, y) $ is uniformly (with respect to $ x, y\in \mathbb{Z}^d $) continuously differentiabile in $t$ for each $0\leqslant z \leqslant \infty $, then~\eqref{E1.2} can also be treated as the Cauchy problem in the Banach space $l^{2}(\mathbb{Z}^d)$ dependent on the parameters $z$ and $y$:
\begin{equation}\label{E1.5}
\frac{dF_{1}(z; t, \cdot, y)}{dt} = \mathscr{A}F_{1}(z; t, \cdot, y) +  \Delta_{0}f(F_{1}(z; t, \cdot, y))
\end{equation}
with the initial condition $ F_{1}(z; 0, x, y) = e^{-z\delta_{x}(y)} $.
\end{remark}

Because the right sides of equations \eqref{E1.4} and \eqref{E1.5} are continuous and (under the assumption $\beta = f'(1) < \infty$) satisfy the Lipschitz condition as operators in $l^{\infty}(\mathbb{Z}^d)$ and $l^{2}(\mathbb{Z}^d)$, owing to \eqref{E1.4} and \eqref{E1.5} by the theorem on continuous dependence of the solutions of differential equation on initial conditions and parameters~\cite{Hartman82} the functions $ F_{1}(z; t, \cdot) $ and $ F_{1}(z; t, \cdot, y) $ depend continuously with respect to the norm on the spaces $l^{\infty}(\mathbb{Z}^d)$ and $l^{2}(\mathbb{Z}^d)$ on the parameter $z$ for each $y$ and $ z \geqslant 0$. Besides, by the theorem about differentiability of the solutions of differential equations with respect to a parameter the functions $ F_{1}(z; t, \cdot) $ and $ F_{1}(z; t, \cdot, y) $ are continuously differentiable with respect to $z$ for $ z>0 $ as many times as the function $f(u)$, $ u \in [0, 1] $.\qed

Let us now turn to the study of the first moments. To derive the differential equations for the first moments we will use the generating functions and obtained earlier Lemma~\ref{L1} with Remark~\ref{Rem1}. For it, we express the moments in terms of the Laplace generating functions:
\[
M_{1}(t, x) = - \lim_{z\to 0+}\partial_{z}F_{1}(z; t, x),\quad
M_{1}(t, x, y) = - \lim_{z\to 0+}\partial_{z}F_{1}(z; t, x, y).
\]

\begin{lemma}\label{L2} The moments $M_{1}(t, x)$ and $M_{1}(0, x, y)$ satisfy the following backward Kolmogorov differential equations:
\begin{align}\label{E1.6}
\frac{\partial{M_{1}(t, x)} }{\partial{t}} &= (\mathscr{H}M_{1}(t, \cdot))(x),&&
M_{1}(0, x) \equiv 1,\quad x\in \mathbb{Z}^d,~t\geq 0,\\
\label{E1.7}
\frac{\partial{M_{1}(t, x, y)} }{\partial{t}} &= (\mathscr{H}M_{1}(t, \cdot, y))(x),&&
M_{1}(0, x, y) = \delta_{x}(y),\quad t\geq 0
\end{align}
\end{lemma}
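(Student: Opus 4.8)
The plan is to differentiate the integral (or differential) equations for the Laplace generating functions established in Lemma~\ref{L1}, together with Remark~\ref{Rem1}, with respect to $z$ at $z=0+$. By Remark~\ref{Rem1}, $F_{1}(z;t,\cdot)$ and $F_{1}(z;t,\cdot,y)$ are continuously differentiable in $z$ for $z>0$ as many times as $f$, and the derivatives depend continuously (in the $l^{\infty}$, resp.\ $l^{2}$, norm) on $z$ down to $z=0$; this licenses interchanging $\partial_{z}$ with $\partial_{t}$ and with the bounded operators $\mathscr{A}$ and $\Delta_{0}$. Writing $M_{1}(t,x)=-\lim_{z\to0+}\partial_{z}F_{1}(z;t,x)$ and similarly for $M_{1}(t,x,y)$, I would apply $-\partial_{z}$ to \eqref{E1.1} and \eqref{E1.2}.

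The key computation is the chain rule applied to the nonlinear term $\delta_{0}(x)f(F_{1}(z;t,x))$. We have $-\partial_{z}\bigl[f(F_{1}(z;t,x))\bigr]=-f'(F_{1}(z;t,x))\,\partial_{z}F_{1}(z;t,x)$, and since $F_{1}(z;t,x)\to1$ as $z\to0+$ (because $\eta_{x,t}$ is a.s.\ finite, or directly from the initial condition propagated by the equation), $f'(F_{1}(z;t,x))\to f'(1)=\beta$. Hence the limit of this term is $\beta\cdot\bigl(-\lim_{z\to0+}\partial_{z}F_{1}(z;t,x)\bigr)=\beta M_{1}(t,x)$, so that $\delta_{0}(x)f(F_{1})$ contributes $\delta_{0}(x)\beta M_{1}(t,x)=(\beta\Delta_{0}M_{1}(t,\cdot))(x)$. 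The linear term $(\mathscr{A}F_{1}(z;t,\cdot))(x)$ contributes $(\mathscr{A}M_{1}(t,\cdot))(x)$ by linearity and boundedness of $\mathscr{A}$. Adding the two gives the right-hand side $(\mathscr{A}+\beta\Delta_{0})M_{1}(t,\cdot)=(\mathscr{H}M_{1}(t,\cdot))(x)$, which is exactly \eqref{E1.6}; the argument for \eqref{E1.7} is identical with $y$ carried as a parameter. The initial conditions follow from $F_{1}(z;0,x)=e^{-z}$, whence $M_{1}(0,x)=-\partial_{z}e^{-z}\big|_{z=0}=1$, and from $F_{1}(z;0,x,y)=e^{-z\delta_{x}(y)}$, whence $M_{1}(0,x,y)=\delta_{x}(y)$.

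The main obstacle is justifying the interchange of $\lim_{z\to0+}$ with $\partial_{t}$ and with the spatial operators, and in particular controlling the nonlinear term uniformly in $x$ (and $y$). For the $l^{\infty}$ equation \eqref{E1.1} this is delicate because $f'$ need not be defined or continuous at $u=1$ from the right in a way that is uniform over all $x$; here one uses that, by Remark~\ref{Rem1}, the convergence $\partial_{z}F_{1}(z;t,\cdot)\to -M_{1}(t,\cdot)$ holds in operator norm, so that $f'(F_{1}(z;t,x))\partial_{z}F_{1}(z;t,x)\to \beta\,\partial_{z}F_{1}(0+;t,x)$ uniformly in $x$, the product of a uniformly convergent bounded sequence and a norm-convergent one. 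Once the uniform convergence is in hand, passing to the limit in the differential equations and invoking the continuous dependence on parameters from Remark~\ref{Rem1} completes the proof; one should also note that the resulting linear Cauchy problems for $M_{1}$ have bounded generators, so the solutions are well defined and the derived equations are consistent.
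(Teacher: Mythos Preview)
Your proposal is correct and follows essentially the same route as the paper: differentiate the backward equations~\eqref{E1.1}--\eqref{E1.2} for the Laplace generating functions with respect to $z$ (justified by Remark~\ref{Rem1}), apply the chain rule to the nonlinear term $\delta_{0}(x)f(F_{1})$, and let $z\to 0+$ so that $f'(F_{1})\to f'(1)=\beta$. Your treatment is in fact more careful than the paper's, which records only the differentiation and the passage to the limit without the uniformity discussion or the verification of the initial conditions.
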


\begin{proof} Let us derive equation \eqref{E1.6}; equation \eqref{E1.7} can be derived similarly. By differentiating the equation
 $$ \partial_{t}F_{1}(z; t, x, y) = (\mathscr{A}F_{1}(z; t, \cdot, y))(x) + \delta_{0}(x)f(F_{1}(z; t, x, y)), $$
 with respect to $z$ (which is possible due to Remark~\ref{Rem1}) we get
$$ - \partial_{t}\partial_{z}F_{1}(z; t, x, y) = -(\mathscr{A}\partial_{z} F_{1}(z; t, \cdot, y))(x) + \delta_{0}(x)\partial_{z}f(F_{1}(z; t, x, y)).
$$
Here, expressing the last term with the help of following formula for the derivative of the superposition $f(F_{1}(z; t, x, y))$ of the functions $f$ and $F_{1}(z; t, x, y)$:
$$
\partial_{z}f(F_{1}(z; t, x, y)) = f'(F_{1}(z; t, x, y))\cdot \partial_{z}F_{1}(z; t, x, y),
$$
we obtain the equation:
$$
- \partial_{t}\partial_{z}F_{1}(z; t, x, y) = -(\mathscr{A}\partial_{z} F_{1}(z; t, \cdot, y))(x) + \delta_{0}(x)f'(F_{1}(z; t, x, y))\cdot \partial_{z}F_{1}(z; t, x, y).
$$
From here, by passing to the limit with respect to $ z\rightarrow 0 $ we get equation~\eqref{E1.6}.
\end{proof}

\begin{remark}\label{Rem2}\rm
Owing to Remark~\ref{Rem1} equations \eqref{E1.6} and \eqref{E1.7} can be interpreted as the Cauchy problems in the Banach spaces $l^{\infty}(\mathbb{Z}^d)$ and $l^{2}(\mathbb{Z}^d)$, and the solutions of these Cauchy problems are unique and belong to the Banach spaces $l^{\infty}(\mathbb{Z}^d)$ and $l^{2}(\mathbb{Z}^d)$.
\end{remark}

\begin{lemma}\label{L3} For $ n\geq 0 $ and $ t\geq 0 $ the higher moments of subpopulations: $\eta_{x,t}$ and $\eta_{x,t}(y)$, at $y\in \mathbb{Z}^{d}$, whose initial position was at point $x$, satisfy the following backward Kolmogorov differential equations:
\begin{equation}\label{E1.8}
\partial_{t}M_{n}(t, x) = (\mathscr{H}M_{n}(t, \cdot))(x) + \delta_0(x)g_{n}(M_{1}(t,x),\ldots, M_{n-1}(t, x)),
\end{equation}
with the initial condition $M_{n}(0, x) \equiv 1$  or
\begin{equation}\label{E1.9}
\partial_{t}M_{n}(t, x, y) = (\mathscr{H}M_{n}(t, \cdot, y))(x) + \delta_0(x)g_{n}(M_{1}(t,x, y),\ldots, M_{n-1}(t, x, y))
\end{equation}
with the initial condition $ M_{n}(0, x, y) = \delta_{x}(y)$, $ y\in \mathbb{Z}^d$,
where
\begin{align*} g_{n}(M_{1}, M_{2},\ldots, M_{n-1}) &=  \sum_{r=1}^{n} \frac{\beta^{(r)}}{r!} \sum_{\stackrel{i_{1},\ldots,i_{r}>0}{i_{1}+\cdots+i_{r}=n}}
\frac{n!}{i_{1}!\cdots i_{r}!} M_{i_{1}}\cdots M_{i_{r}}.
\end{align*}
\end{lemma}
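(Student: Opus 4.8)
The plan is to derive \eqref{E1.8} and \eqref{E1.9} from the generating-function equations of Lemma~\ref{L1} by differentiating them $n$ times in $z$ and letting $z\to 0+$, exactly as \eqref{E1.6} was obtained in Lemma~\ref{L2}. I would run the argument by induction on $n$, with $n=0$ (where $M_0\equiv 1$) and $n=1$ (Lemma~\ref{L2}) as the base. Assume \eqref{E1.8} and \eqref{E1.9} hold for all orders $1,\dots,n-1$, so that $M_1(t,\cdot),\dots,M_{n-1}(t,\cdot)$ are the unique, finite solutions of their linear Cauchy problems in $l^\infty(\mathbb{Z}^d)$, respectively $l^2(\mathbb{Z}^d)$. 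I carry out the computation for $M_n(t,x)$; the case of $M_n(t,x,y)$ is identical, with $y$ kept as a parameter, and its initial condition $M_n(0,x,y)=\delta_x(y)$ comes from $\eta_{x,0}(y)=\delta_x(y)\in\{0,1\}$.

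First I would differentiate \eqref{E1.1} $n$ times in $z$. This is legitimate because, by the discussion following Remark~\ref{Rem1} (and under the finiteness-of-moments assumption $\beta^{(n)}=f^{(n)}(1)<\infty$, needed for the $n$-th moment to exist), $F_1(z;t,\cdot)$ is $n$ times continuously differentiable in $z$, while the interchange $\partial_t\partial_z^{\,n}=\partial_z^{\,n}\partial_t$ and continuity of the mixed derivatives are furnished by the theorems on dependence of solutions of the Cauchy problem on a parameter~\cite{Hartman82}. To the composition $f(F_1(z;t,x))$ I would apply the Fa\`a di Bruno formula in its unordered form,
\[
\partial_z^{\,n} f\bigl(F_1(z;t,x)\bigr)=\sum_{r=1}^{n}\frac{f^{(r)}\bigl(F_1(z;t,x)\bigr)}{r!}\sum_{\substack{i_1,\dots,i_r>0\\ i_1+\cdots+i_r=n}}\frac{n!}{i_1!\cdots i_r!}\,\partial_z^{\,i_1}F_1(z;t,x)\cdots\partial_z^{\,i_r}F_1(z;t,x),
\]
so that differentiating \eqref{E1.1} gives
\[
\partial_t\,\partial_z^{\,n}F_1(z;t,x)=\bigl(\mathscr{A}\,\partial_z^{\,n}F_1(z;t,\cdot)\bigr)(x)+\delta_0(x)\,\partial_z^{\,n}f\bigl(F_1(z;t,x)\bigr).
\]

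Next I would let $z\to 0+$. Since $F_1(0;t,x)=E e^{-0\cdot\eta_{x,t}}=1$, we have $f^{(r)}(F_1(z;t,x))\to f^{(r)}(1)=\beta^{(r)}$ (with $\beta^{(1)}=\beta$), while $\partial_z^{\,k}F_1(z;t,x)=(-1)^k E\bigl[\eta_{x,t}^k e^{-z\eta_{x,t}}\bigr]\to(-1)^k M_k(t,x)$ by monotone convergence, the limits being finite for $k\le n-1$ by the induction hypothesis and for $k=n$ by the moment assumption. Because $i_1+\cdots+i_r=n$ in every term, the product of signs $(-1)^{i_1}\cdots(-1)^{i_r}$ collapses to a single $(-1)^n$, which cancels the factor $(-1)^n$ carried by $\partial_z^{\,n}F_1$ on the left-hand side and inside $\mathscr{A}\,\partial_z^{\,n}F_1$; likewise $\partial_t\,\partial_z^{\,n}F_1=\partial_z^{\,n}\partial_t F_1\to(-1)^n\partial_t M_n$. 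What survives is
\[
\partial_t M_n(t,x)=\bigl(\mathscr{A}M_n(t,\cdot)\bigr)(x)+\delta_0(x)\sum_{r=1}^{n}\frac{\beta^{(r)}}{r!}\sum_{\substack{i_1,\dots,i_r>0\\ i_1+\cdots+i_r=n}}\frac{n!}{i_1!\cdots i_r!}\,M_{i_1}(t,x)\cdots M_{i_r}(t,x).
\]
The $r=1$ summand equals $\beta\,\delta_0(x)M_n(t,x)$, and since $(\beta\Delta_0 M_n(t,\cdot))(x)=\beta M_n(t,0)\delta_0(x)=\beta\,\delta_0(x)M_n(t,x)$ it merges with $(\mathscr{A}M_n(t,\cdot))(x)$ into $(\mathscr{H}M_n(t,\cdot))(x)$; every summand with $r\ge 2$ has all $i_j\le n-1$, hence depends only on $M_1,\dots,M_{n-1}$ and equals $\delta_0(x)g_n(M_1(t,x),\dots,M_{n-1}(t,x))$. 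This is \eqref{E1.8}, the initial condition $M_n(0,x)\equiv 1$ being inherited from $\eta_{x,0}\equiv 1$; the same steps give \eqref{E1.9}. Finally, the inhomogeneity $\delta_0 g_n(M_1,\dots,M_{n-1})$ is now a known continuous function valued in the relevant Banach space, so \eqref{E1.8}, \eqref{E1.9} are linear inhomogeneous Cauchy problems with unique solutions there, as in Remark~\ref{Rem2}, which closes the induction.

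The step I expect to require the most care is the passage $z\to 0+$: one must secure in advance the finiteness of the limits $\partial_z^{\,k}F_1\to(-1)^k M_k$ and the admissibility of exchanging $\partial_t$ with $\lim_{z\to 0+}$ — this is exactly where the induction hypothesis (finiteness of the lower subpopulation moments) and the transfer of regularity through the ODE continuous-dependence theorems both enter. The Fa\`a di Bruno sign bookkeeping, though elementary, must also be done carefully so that all minus signs cancel and the stated positive-coefficient form of $g_n$ emerges.
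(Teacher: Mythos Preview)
Your proposal is correct and follows essentially the same route as the paper's proof: differentiate \eqref{E1.1}/\eqref{E1.2} $n$ times in $z$, apply Fa\`a di Bruno to $f(F_1)$, and let $z\to 0+$. The only difference is cosmetic---the paper starts from the partition (Bell-polynomial) form of Fa\`a di Bruno and proves the combinatorial identity \eqref{E:star} to reach the ordered-composition form you invoke directly, and it leaves implicit both the sign bookkeeping and the separation of the $r=1$ summand into $\mathscr{H}$ that you spell out.
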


\begin{proof} The proof of the lemma is by analogy with \cite{YarBRW:r}. Again, we confine ourselves to proving only equation~\eqref{E1.9}.

Let us differentiate $n$ times with respect to $z$ the equation
$$
\partial_{t}F_{1}(z; t, x, y) = (\mathscr{A}F_{1}(z; t, \cdot, y))(x) + \delta_{0}(x)f(F_{1}(z; t, x, y)),
$$
(it is permitted owing to Remark~\ref{Rem1}). Then we obtain:
$$
(-1)^n \partial_{t} \partial_{z}^{n}F_{1}(z; t, x, y) = (-1)^{n}(\mathscr{A}\partial_{z}^{n}F_{1}(z; t, \cdot, y))(x) + \delta_{0}(x)\partial_{z}^{n}f(F_{1}(z; t, x, y))
$$

Using the Faa di Bruno's formula~\cite{M:00} we calculate the second summand on the right side of the equation:
\begin{equation}\label{E:df}
\frac{d^{n}f(F(z))}{dz^{n}} = \sum_{r=1}^{n} f^{(r)}(F(z)) \sum_{\stackrel {i_{1}+\cdots+i_{n}=r}{
		i_{1}+2i_{2}+\cdots+ni_{n}=n}}
\frac{n!}{i_{1}!\cdots i_{n}!} \left( \frac{F^{(1)}(z)}{1!} \right)^{i_{1}}\cdots \left( \frac{F^{(n)}(z)}{n!} \right)^{i_{n}}.\end{equation}

Let us prove the supportive equation below in order to simplify the inner sum in this formula to the required in the lemma's statement form:
\begin{equation}\label{E:star} \sum_{\stackrel {i_{1}+\cdots+i_{n}=r}{i_{1}+\cdots+ni_{n}=n}}
\frac{n!}{i_{1}!\cdots i_{n}!}x_{1}^{i_{1}}\cdots x_{n}^{i_{n}} = \sum_{\stackrel {i_{1},\ldots,i_{r}>0}{i_{1}+\cdots+i_{r}=n}} x_{i_{1}}\cdots x_{i_{r}},
\end{equation}
where $ x_{1},\ldots, x_{n} $ are formal variations. Consider the polynomial $ P(t) := (x_{1}t + \cdots +x_{n}t^{n})^{r} $ and calculate its coefficient at $t^{n} $. By the polynomial formula this coefficient coincides with the left side of~\eqref{E:star}. On the other side, by direct gathering the components in the product  $ (x_{1}t + \cdots +x_{n}t^{n})^{r} $ at $ t^{n} $, we get the right side of the formula~\eqref{E:star}.

Thus equation~\eqref{E:df} is proved. Then rewrite the Faa di Bruno's formula:

$$
\frac{d^{n}f(F(z))}{dz^{n}} = \sum_{r=1}^{n} \frac{f^{(r)}(F(z))}{r!} \cdot \sum_{ \stackrel{i_{1},\ldots,i_{r}>0}{i_{1}+\cdots+i_{r}=n}}
\frac{n!}{i_{1}!\cdots i_{n}!} F^{(i_{1})}(z) \cdots  F^{(i_{r})}(z).
$$

Let $ F(z) = F_{1}(z; t, x, y) $. Then
\begin{multline*}
\delta_{0}(x)\partial_{z}^{n}f(F_{1}(z; t, x, y)) =  \sum_{r=1}^{n} \frac{f^{(r)}(F_{1}(z; t, x, y)) }{r!}\\
\times \sum_{ \stackrel{i_{1},\ldots,i_{r}>0}{i_{1}+\cdots+i_{r}=n}}
\frac{n!}{i_{1}!\cdots i_{n}!} \partial_{z}^{i_{1}} F_{1}(z; t,x,y) \cdots  \partial_{z}^{i_{r}}F_{1}(z, t, x, y).
\end{multline*}
Using the expression of the moments in terms of generating functions, by the passage to the limit with respect to $z\rightarrow 0$ we get the required equation~\eqref{E1.9}.
\end{proof}

\begin{lemma}\label{L4}
For $ n> 0$ and $ t\geq 0 $ the higher moments of subpopulations: $\eta_{x,t}$ and $\eta_{x,t}(y)$, at $y\in \mathbb{Z}^{d}$, whose initial position was at point $x$, satisfy the following integral equations:
\begin{equation}\label{E1.10}
M_{n}(t, x) = M_{1}(t, x) + \delta_{0}(x)\int_{0}^{t}M_{1}(t-q, x, 0)g_{n}(M_{1}(q,0),\ldots, M_{n-1}(q, 0))dq,
\end{equation}
\begin{multline}\label{E1.11}
M_{n}(t, x, y) = M_{1}(t, x, y)\\ + \delta_{0}(x)\int_{0}^{t}M_{1}(t-q, x, 0)g_{n}(M_{1}(q,0, y),\ldots, M_{n-1}(q, 0, y))dq,
\end{multline}
where
\begin{align*}
g_{n}(M_{1}, M_{2},\ldots, M_{n-1}) &=  \sum_{r=1}^{n} \frac{\beta^{(r)}}{r!} \cdot \sum_{ \stackrel{i_{1},\cdots,i_{r}>0}{i_{1}+\cdots+i_{r}=n}}
\frac{n!}{i_{1}!\cdots i_{n}!} M_{i_{1}} \cdots M_{i_{r}}.
\end{align*}
\end{lemma}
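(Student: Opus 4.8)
The plan is to recognise the integral equations \eqref{E1.10}--\eqref{E1.11} as the mild (Duhamel, variation-of-parameters) form of the differential Cauchy problems \eqref{E1.8}--\eqref{E1.9}, which by Lemma~\ref{L3} together with Remark~\ref{Rem2} are uniquely solvable in $l^{\infty}(\mathbb{Z}^{d})$ and $l^{2}(\mathbb{Z}^{d})$ respectively. I would argue by induction on $n$. At the $n$-th step the moments $M_{1},\dots,M_{n-1}$ are already known --- $M_{1}$ from Lemma~\ref{L2}, the rest from the previous induction steps --- and are continuous in $t$ by Lemma~\ref{L1} and Remark~\ref{Rem1}; since $g_{n}$ is a polynomial in its arguments, the quantity $c_{n}(t;y):=g_{n}\bigl(M_{1}(t,0,y),\dots,M_{n-1}(t,0,y)\bigr)$, and likewise $c_{n}(t):=g_{n}\bigl(M_{1}(t,0),\dots,M_{n-1}(t,0)\bigr)$, is then a known scalar function of $t$, continuous on $[0,\infty)$, and because of the factor $\delta_{0}(x)$ that multiplies $g_{n}$ in \eqref{E1.8}--\eqref{E1.9} only its value at the source enters. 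Hence \eqref{E1.9} takes the form of the linear inhomogeneous Cauchy problem
\[
\frac{d u}{d t}=\mathscr{H}u+c_{n}(t;y)\,\delta_{0}(\cdot),\qquad u(0,\cdot)=\delta_{y}(\cdot),
\]
in $l^{2}(\mathbb{Z}^{d})$, and \eqref{E1.8} the analogous one with inhomogeneity $c_{n}(t)\,\delta_{0}(\cdot)$ and initial value $u(0,\cdot)\equiv 1$ in $l^{\infty}(\mathbb{Z}^{d})$.

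The operator $\mathscr{H}=\mathscr{A}+\beta\Delta_{0}$ is bounded on $l^{2}(\mathbb{Z}^{d})$ and on $l^{\infty}(\mathbb{Z}^{d})$ (by Schur's lemma, exactly as used in the proof of Lemma~\ref{L1}), so it generates a uniformly continuous semigroup $\{e^{t\mathscr{H}}\}_{t\ge 0}$ and the problem above has the unique solution
\[
u(t,\cdot)=e^{t\mathscr{H}}u(0,\cdot)+\int_{0}^{t}c_{n}(q;y)\,e^{(t-q)\mathscr{H}}\delta_{0}(\cdot)\,dq .
\]
It then remains only to identify the semigroup acting on the relevant data, which I read off from Lemma~\ref{L2}: since $M_{1}(t,\cdot,y)$ solves $\partial_{t}M_{1}=\mathscr{H}M_{1}$ with $M_{1}(0,\cdot,y)=\delta_{y}(\cdot)$, uniqueness gives $e^{t\mathscr{H}}\delta_{y}(\cdot)=M_{1}(t,\cdot,y)$; specialising to $y=0$ gives $e^{t\mathscr{H}}\delta_{0}(\cdot)=M_{1}(t,\cdot,0)$; and since, by \eqref{E1.6}, $M_{1}(t,\cdot)$ solves the same equation with the constant initial value $1$, we get $e^{t\mathscr{H}}\mathbf{1}=M_{1}(t,\cdot)$. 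Writing out the Duhamel formula at a point $x\in\mathbb{Z}^{d}$ with these substitutions reproduces \eqref{E1.11}, and the parallel computation with initial value $\equiv 1$ reproduces \eqref{E1.10}; the functions $g_{n}$ are literally the same polynomials as in Lemma~\ref{L3}.

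Equivalently, one may verify \eqref{E1.10}--\eqref{E1.11} directly: take $R(t,\cdot)$ to be the right-hand side of \eqref{E1.11}, differentiate in $t$ using the Leibniz rule for the $q$-integral, use \eqref{E1.7} to replace $\partial_{t}M_{1}(t-q,\cdot,0)$ by $\mathscr{H}M_{1}(t-q,\cdot,0)$, pull the bounded operator $\mathscr{H}$ out of the integral, and observe that the boundary term produced at $q=t$ equals $M_{1}(0,\cdot,0)\,c_{n}(t;y)=\delta_{0}(\cdot)\,c_{n}(t;y)$, which is exactly the source term in \eqref{E1.9}; together with $R(0,\cdot)=\delta_{y}(\cdot)$ this shows that $R$ solves \eqref{E1.9}, and the uniqueness part of Remark~\ref{Rem2} then forces $R=M_{n}(\cdot,\cdot,y)$. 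For this lemma there is no serious obstacle; the points that need a word of justification are the interchanges of $\partial_{t}$ and of $\mathscr{H}$ with $\int_{0}^{t}(\cdot)\,dq$ --- both legitimate because $\mathscr{H}$ is bounded and $q\mapsto M_{1}(t-q,\cdot,0)$ is norm-continuous --- and the identification $e^{t\mathscr{H}}\delta_{0}(\cdot)=M_{1}(t,\cdot,0)$, which is immediate from the uniqueness in Remark~\ref{Rem2}; the rest is bookkeeping. A probabilistic derivation is also available, by conditioning the subpopulation $\eta_{x,t}$ on the instant at which its progenitor first reaches the source and invoking the Markov property, but the analytic argument above is shorter and uses only the lemmas already established.
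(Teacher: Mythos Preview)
Your proposal is correct and is exactly the approach the paper takes: its entire proof is the single line ``using variation of parameters in~\eqref{E1.8} and~\eqref{E1.9}, we obtain the required equations,'' and your Duhamel/semigroup computation (with the identification $e^{t\mathscr{H}}\delta_{0}=M_{1}(t,\cdot,0)$ from Lemma~\ref{L2}) is a careful execution of precisely that. As a side remark, the formula your derivation actually produces does \emph{not} carry the prefactor $\delta_{0}(x)$ on the integral that appears in the stated \eqref{E1.10}--\eqref{E1.11}; this is a slip in the statement rather than in either argument, since for $x\neq 0$ the particle can still reach the source and branch, so $M_{n}(t,x)\neq M_{1}(t,x)$ in general.
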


\begin{proof} The proof of the lemma is by analogy with \cite{YarBRW:r}. Using  variation of parameters in~\eqref{E1.8} and~\eqref{E1.9}, we obtain the required equations.
\end{proof}

In conclusion of this section let us recall the model of BRW defined in \cite{YarBRW:r}.
Let at time $t=0$ there be only one particle at $x\in \mathbb{Z}^{d}$, from which the evolution of BRW starts. Denote by $\mu_{x,t}$ the total number of particles on the lattice at time $t$ (the population of particles on $\mathbb{Z}^{d}$), and by $\mu_{x,t}(y)$ the number of particles at $\mu_{x,t}(y)$. They satisfy the initial conditions $ \mu_{x,0}=1 $ and $\mu_{x,0}(y)=\delta_{x}(y)$, respectively. Following~\cite{YarBRW:r}  let us consider the Laplace generating functions:
\[
F (z; t, x) := Ee^{-z\mu_{x, t}}, \quad
F (z; t, x, y) := Ee^{-z\mu_{x, t}(y)}.
\]

\begin{theorem}\label{L:osn} For each $ 0\leqslant z \leqslant \infty $ the generating functions $F_{1}(z; t, x) $ and $ F (z; t, x) $ are continuously differentiable with respect to $t$ uniformly in $ x\in \mathbb{Z}^d$, satisfy the inequalities $ 0\leqslant F_{1}(z; t, x), F (z; t, x) \leqslant 1 $ and the backward Kolmogorov differential equation~\eqref{E1.1}; the generating functions $F_{1}(z; t, x,y) $ and $ F (z; t, x,y) $ are continuously differentiable with respect to $t$ uniformly in $ x,y\in \mathbb{Z}^d$, satisfy the inequalities $ 0\leqslant F_{1}(z; t,x,y), F (z; t, x,y) \leqslant 1 $ and the backward Kolmogorov differential equation~\eqref{E1.2}.
\end{theorem}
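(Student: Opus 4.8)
The plan is to reduce Theorem~\ref{L:osn} to what has already been established. The part concerning $F_{1}(z; t, x)$ and $F_{1}(z; t, x, y)$ is exactly the content of Lemma~\ref{L1}, so nothing new is required there. For $F(z; t, x)$ and $F(z; t, x, y)$ I would argue that these functions in fact coincide with $F_{1}(z; t, x)$ and $F_{1}(z; t, x, y)$ respectively: the subpopulation $\eta_{x,\cdot}$ of Section~\ref{BRWI} is by construction generated by a single progenitor particle sitting at $x$ at time $0$, and distinct subpopulations do not interact (the branching at the source acts on each particle separately, walking particles never collide, and offspring evolve independently and by the same law as the parent). Hence $\eta_{x,\cdot}$ is a copy of the single-particle branching random walk $\mu_{x,\cdot}$ of~\cite{YarBRW:r}, which gives $F_{1}(z; t, x) = F(z; t, x)$ and $F_{1}(z; t, x, y) = F(z; t, x, y)$ for all admissible $z,t,x,y$; every analytic assertion of the theorem then transfers automatically from one family to the other.

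Should a self-contained derivation for the $\mu$-process be preferred, I would simply repeat the first-step computation from the proof of Lemma~\ref{L1}. One conditions on the evolution of the unique particle present at time $t$ over the short interval $[t, t+h]$ — a jump to some $x' \neq x$ with probability $a(x,x')h + o(h)$; if $x = 0$, replacement by $n \neq 1$ independent copies of the process with probability $b_n h + o(h)$; otherwise no change — writes $F(z; t+h, x, y)$ via the branching property as the matching sum of products of the functions $F(z; t, \cdot, y)$, isolates the remainder terms $S_1(h),\dots,S_4(h)$ as in~\eqref{E1.3}, and checks they are $o(h)$ using the absolute convergence of $\sum_{x'}a(x,x')$ together with~\eqref{E:phxy}--\eqref{E:ph1}. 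This yields the uniform-in-$x,y$ differentiability in $t$ and the equations~\eqref{E1.1} and~\eqref{E1.2}; the bounds $0 \leqslant F(z; t, x), F(z; t, x, y) \leqslant 1$ come from nonnegativity of $\mu_{x,t}$ and $\mu_{x,t}(y)$, and the initial conditions from $\mu_{x,0} = 1$, $\mu_{x,0}(y) = \delta_{x}(y)$. This route is precisely that of~\cite[Lemma~1.2.1]{YarBRW:r}.

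The only delicate point is the identification $\eta_{x,\cdot} \stackrel{d}{=} \mu_{x,\cdot}$: I would need to state carefully that the splitting of the infinite-particle system into subpopulations is consistent, i.e. that each subpopulation is an autonomous branching random walk driven by its own progenitor alone. This is implicit in the model description of Section~\ref{BRWI} — particles interact with the source but not with one another — so making the underlying coupling explicit is really the whole of the argument. Once this is done, Theorem~\ref{L:osn} is a restatement of Lemma~\ref{L1} together with the corresponding result of~\cite{YarBRW:r}, and the coincidence of the two Cauchy problems is exactly what makes the duality developed in Section~\ref{BRWIF} possible.
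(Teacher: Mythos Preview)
Your proposal is correct. The paper's own proof is in fact the minimal version of your second paragraph: it simply invokes Lemma~\ref{L1} for $F_{1}$ and cites \cite[Lemma~1.2.1]{YarBRW:r} for $F$, with no further argument. In particular, the paper does \emph{not} establish $F_{1}\equiv F$ at this point; rather, it derives the coincidence of the moments $M_{n}=m_{n}$ only \emph{after} Theorem~\ref{L:osn}, as a consequence of both families satisfying the same Cauchy problem with the same initial data and of uniqueness of solutions.

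Your primary route --- the probabilistic identification $\eta_{x,\cdot}\stackrel{d}{=}\mu_{x,\cdot}$ via non-interaction of subpopulations --- is a genuinely different and somewhat stronger argument: it yields equality of the generating functions (hence of all finite-dimensional distributions) directly, rather than only equality of moments deduced analytically. It is valid given the model description in Section~\ref{BRWI}, but note that the paper deliberately avoids relying on it here, preferring to recover the duality purely from the differential equations. Your ``delicate point'' is well spotted; making the coupling precise is indeed the substance of that route, and it is exactly what the paper leaves implicit.
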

\begin{proof} By Lemma~\ref{L1} we obtain the assertion of the theorem for $F_{1}(z; t, x)$ and $F_{1}(z; t,x, y)$ and by \cite[Lemma~1.2.1]{YarBRW:r} for $F(z; t, x)$ and $F(z; t,x, y)$.
\end{proof}
Theorem~\ref{L:osn} implies that for each $n\geq 1$ the bahaviour of $m_{n}(t,x):=E\mu_{x,t}^{n}$ in the model described in \cite{YarBRW:r} coincides with the behaviour of $M_{n}(t,x)$  in the model under consideration, and for each $n\geq 1$ the behaviour of $m_{n}(t,x,y):=E\mu_{x,t}^{n}(y)$ in the model described in \cite{YarBRW:r} coincides with the behaviour of  $M_{n}(t,x,y):=E\eta_{x,t}^{n}(y)$. This demonstrates that the results obtained for BRWs with one initial particle may be applied for the study of BRWs with infinitely many initial particles.
Owing to Theorem~\ref{L:osn} we can rewrite the limit theorem, proved in \cite{YarBRW:r} for $M_{n}(t, x, y)$ and $ M_{n}(t, x)$. Recall~\cite{YarBRW:r} that the intensity value $\beta_{c}$ is called \emph{critical} if for $\beta<\beta_{c}$ all the moments tend to zero or bounded on infinity, while for $\beta>\beta_{c}$ all the moments grow exponentially. As is known~\cite{YarBRW:r} in this case the growth rates of moments is closely related to the largest positive eigenvalue $\lambda_{0}$ of the operator $\mathscr{H}=\mathscr{A}+ \beta\Delta_{0}$.

\begin{theorem} \label{T3}
Let $\sum_{x\in \mathbb{Z}^{d}} a(0,x)|x|^{2}<\infty$ then
for all $ n \in \mathbb{N}$ and $ t \rightarrow \infty$
\[
M_{n}(t, x, y)  \sim C_{n}(x, y)u_{n}(t), \quad M_{n}(t, x) \sim C_{n}(x)v_{n}(t),
\]
 where $ C_{n}(y, x) > 0,  C_{n}(x) > 0$, and the function $ v_{n}(t) $ is expressed by:

\noindent a) for $ \beta > \beta_{c} $,
$$ u_{n}(t) = e^{n\lambda_0 t}, \quad  v_{n}(t) = e^{n\lambda_0 t};$$
\noindent b) for $ \beta = \beta_{c} $:
\begin{align*}
d&=1: &u_{n}(t)&= t^{(n-1)/2}(\ln t)^{n-1}, &v_{n}(t)&= t^{(n-1)/2};\\
d&=2: &u_{n}(t)&= t^{-1}, &v_{n}(t)&= (\ln t)^{n-1};\\	
d&=3: &u_{n}(t)&= t^{-1/2}(\ln t)^{n-1}, &v_{n}(t)&= t^{n-1/2};\\	
d&=4:  &u_{n}(t)&= t^{n-1}(\ln t)^{1-2n}, &v_{n}(t)&= t^{2n-1}(\ln t)^{1-2n};\\	
d&\geq 5: &u_{n}(t)&= t^{2n-1}, &v_{n}(t)&= t^{2n-1};
\end{align*}
\noindent c) for $ \beta < \beta_{c} $:
\begin{align*}
d&=1:  &u_{n}(t) &= t^{-3/2}, &v_{n}(t) &= t^{-1/2}; \\
d&=2:  &u_{n}(t) &= (t \ln^{2} t)^{-1}, &v_{n}(t)&= (\ln t)^{-1};\\
d&\geq 3:  &u_{n}(t) &= t^{-d/2}, &v_{n}(t)&\equiv 1 .
\end{align*}
\end{theorem}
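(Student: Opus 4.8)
The plan is to deduce Theorem~\ref{T3} from the corresponding limit theorem for the classical BRW with a single initial particle proved in \cite{YarBRW:r}, exploiting the coincidence of the moment equations established above. Concretely, I would first observe that a subpopulation $\eta_{x,t}$ (resp. $\eta_{x,t}(y)$) is by construction generated by one progenitor particle that starts at $x$ at time $0$ and then walks and branches at the source according to exactly the same rules as the particle in the model of \cite{YarBRW:r}; moreover the initial data match, $\eta_{x,0}=1=\mu_{x,0}$ and $\eta_{x,0}(y)=\delta_{x}(y)=\mu_{x,0}(y)$. Theorem~\ref{L:osn} makes this precise at the level of Laplace generating functions: $F_{1}(z;t,x)$ and $F(z;t,x)$ solve the same Cauchy problem \eqref{E1.1}, and $F_{1}(z;t,x,y)$ and $F(z;t,x,y)$ solve the same Cauchy problem \eqref{E1.2}, with identical initial conditions. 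This is precisely the ``duality'' promised in Section~\ref{BRWIF}.

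Next I would pass from generating functions to moments. Differentiating the generating-function equations in $z$ as in Lemmas~\ref{L2}--\ref{L4} yields, for every $n\ge 1$, the linear non-homogeneous backward Kolmogorov equations \eqref{E1.8}--\eqref{E1.9} (equivalently the integral equations \eqref{E1.10}--\eqref{E1.11}) for $M_{n}(t,x)$ and $M_{n}(t,x,y)$, driven by the operator $\mathscr{H}=\mathscr{A}+\beta\Delta_{0}$ and the polynomial nonlinearity $g_{n}$ in the lower moments; the very same recursion holds for $m_{n}(t,x)$ and $m_{n}(t,x,y)$ in \cite{YarBRW:r}. By Remark~\ref{Rem2} these Cauchy problems have unique solutions in $l^{\infty}(\mathbb{Z}^{d})$ and $l^{2}(\mathbb{Z}^{d})$, so by induction on $n$ one obtains $M_{n}(t,x)=m_{n}(t,x)$ and $M_{n}(t,x,y)=m_{n}(t,x,y)$ for all $t\ge 0$, $x,y\in\mathbb{Z}^{d}$, $n\in\mathbb{N}$. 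It then remains only to quote the limit theorem of \cite{YarBRW:r}: under $\sum_{x}a(0,x)|x|^{2}<\infty$ the moments $m_{n}(t,x,y)$ and $m_{n}(t,x)$ obey exactly the asymptotics listed in parts a)--c), with $\beta_{c}$ the critical intensity and $\lambda_{0}$ the principal eigenvalue of $\mathscr{H}$ in the supercritical regime; substituting the identities above gives the claim.

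The step carrying all the analytic weight is of course the cited limit theorem itself, and I would make clear why the hypothesis $\sum_{x}a(0,x)|x|^{2}<\infty$ enters precisely here and nowhere else in the paper: the moment asymptotics in the critical and subcritical regimes are governed by the large-$t$ behaviour of the transition probability $p(t,0,0)$ and of the Green function of $\mathscr{A}$ at the source, and finite variance of the jumps is exactly what yields the local central limit estimate $p(t,0,0)\sim c_{d}\,t^{-d/2}$ and the corresponding Green-function asymptotics that produce the dimension-dependent powers and logarithms in parts b) and c); in the supercritical case $\beta>\beta_{c}$ the rate $e^{n\lambda_{0}t}$ comes instead from the perturbation-theoretic spectral analysis of $\mathscr{H}$, which does not need the variance condition. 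The only genuinely new ingredient in Theorem~\ref{T3} is the reduction described above, the remaining estimates being imported from \cite{YarBRW:r}. The main (mild) obstacle is thus bookkeeping: checking that the inductive identification of the two families of moments is valid uniformly in $x,y$ and for every order $n$, which is guaranteed by the uniform continuous differentiability in Lemma~\ref{L1} together with the uniqueness in Remark~\ref{Rem2}.
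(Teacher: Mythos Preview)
Your proposal is correct and follows essentially the same route as the paper: the paper simply remarks that, owing to Theorem~\ref{L:osn}, the subpopulation moments $M_{n}(t,x)$ and $M_{n}(t,x,y)$ coincide with the single-particle moments $m_{n}(t,x)$ and $m_{n}(t,x,y)$ from \cite{YarBRW:r}, and then restates the limit theorem of \cite{YarBRW:r} verbatim. Your write-up is more explicit about the inductive identification via uniqueness (Remark~\ref{Rem2}) and about where the finite-variance hypothesis enters, but the underlying argument is the same; one small slip is calling this the ``duality'' of Section~\ref{BRWIF}, which in the paper refers instead to the identity $M_{\infty,1}(t,y)\equiv m_{1}(t,y)$ for the full population.
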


\section{The duality of BRW models}\label{BRWIF}

The goal of this section is to prove the following relation
\begin{equation}\label{E:Meqm}
M_{\infty, 1}(t, y) \equiv m_{1}(t, y),
\end{equation}
which can be treated as a kind of duality between the of BRW model with a single initial particle and the BRW model  with infinitely many particles.

\begin{theorem}\label{M:osn}
The function $M_{\infty,1}$ satisfies the Cauchy problem
\begin{equation}\label{M1.1}
\partial_{t}M_{\infty,1}(y) = (\mathscr{A}^{*}M_{\infty,1}(t,\cdot))(y) + \delta_{0}(y)M_{\infty,1}(y), \quad M_{\infty,1}(t,\cdot)=1,
\end{equation}
where
\[
(\mathscr{A}^{*}u)(x):=\sum_{x'}a(x',x)u(x'), \quad u(\cdot)\in l^{p}(\mathbb{Z}^{d}),~x\in\mathbb{Z}^{d}.
\]
\end{theorem}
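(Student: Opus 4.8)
The plan is to obtain \eqref{M1.1} by the ``first-step'' device from the proof of Lemma~\ref{L1}, run forward in time: I look at the evolution on $[t,t+h]$, condition on the particle configuration at time $t$, and let each particle act independently. A particle at $y'\neq y$ lands at $y$ after time $h$ with probability $p(h,y',y)=a(y',y)h+o(h)$; a particle at $y$ with $y\neq 0$ stays there with probability $1+a(0,0)h+o(h)$ and cannot branch; and a particle at the source $y=0$ contributes, in mean, $1\cdot\bigl(1+(a(0,0)+b_{1})h\bigr)+\sum_{n\neq 1}n\,b_{n}h+o(h)=1+\bigl(a(0,0)+\beta\bigr)h+o(h)$ to the occupation number of $0$, since $\sum_{n\neq1}n\,b_{n}=f'(1)-b_{1}=\beta-b_{1}$. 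Collecting these contributions into a remainder exactly as $S_{1}(h),\dots,S_{4}(h)$ in Lemma~\ref{L1}, I get for every $y\in\mathbb{Z}^{d}$
\[
E\bigl[\eta_{t+h}(y)\mid\text{configuration at }t\bigr]=\eta_{t}(y)+h\Bigl[\sum_{y'}a(y',y)\,\eta_{t}(y')+\beta\,\delta_{0}(y)\,\eta_{t}(y)\Bigr]+R(h,y),
\]
with $R(h,y)=o(h)$. Taking expectations, the bracket becomes $(\mathscr{A}^{*}M_{\infty,1}(t,\cdot))(y)+\beta\,\delta_{0}(y)M_{\infty,1}(t,y)$ — the forward analysis naturally produces $\mathscr{A}^{*}$, which coincides with $\mathscr{A}$ by the symmetry $a(y',y)=a(y,y')$, and the source coefficient is $\beta=f'(1)$, matching the operator $\mathscr{H}$ — and $E\,R(h,y)=o(h)$ uniformly in $y$ follows from the truncation estimate of Lemma~\ref{L1} (fix $K$, split the jump sum over $\{0<|y'-y|\le K\}$ and its complement, use $\sum_{z}|a(z)|<\infty$ and the a priori bound $M_{\infty,1}(t,\cdot)\in l^{\infty}$). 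Dividing by $h$ and letting $h\downarrow0$, estimating the difference quotient against $\sum|\cdots|\to 0$ as in Lemma~\ref{L1}, gives the differential equation; the initial condition $M_{\infty,1}(0,\cdot)\equiv 1$ is immediate from $\eta_{0}(y)\equiv1$. Uniqueness, if desired, is automatic because $\mathscr{A}^{*}+\beta\Delta_{0}$ is bounded on $l^{\infty}(\mathbb{Z}^{d})$ (Schur's lemma), so \eqref{M1.1} is a Cauchy problem with a Lipschitz generator.

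The point needing care is to legitimise these manipulations in the presence of infinitely many initial particles: I need a priori that $\eta_{t}(y)<\infty$ a.s., that $M_{\infty,1}(t,\cdot)\in l^{\infty}(\mathbb{Z}^{d})$ (so $\mathscr{A}^{*}M_{\infty,1}(t,\cdot)$ is defined and, all summands being nonnegative, Tonelli lets me pull the expectation through the countable jump sum), and that the counting above does not lose mass to far-away sites. These all come from the subpopulation decomposition $\eta_{t}(y)=\sum_{x}\eta_{x,t}(y)$, which gives $M_{\infty,1}(t,y)=\sum_{x}M_{1}(t,x,y)$; by Lemma~\ref{L2} and Remark~\ref{Rem2}, $M_{1}(t,\cdot,y)=e^{t\mathscr{H}}\delta_{y}\geq0$ in $l^{1}(\mathbb{Z}^{d})$, so $M_{\infty,1}(t,y)=\bigl\|e^{t\mathscr{H}}\delta_{y}\bigr\|_{l^{1}}\le e^{t\|\mathscr{H}\|_{l^{1}\to l^{1}}}<\infty$, locally uniformly in $t$.

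The same representation gives a shorter alternative derivation, which I would also record: since $\mathscr{H}=\mathscr{A}+\beta\Delta_{0}$ is self-adjoint on $l^{2}(\mathbb{Z}^{d})$, Lemma~\ref{L2} yields $M_{1}(t,x,y)=M_{1}(t,y,x)$, hence
\[
M_{\infty,1}(t,y)=\sum_{x}M_{1}(t,y,x)=\sum_{x}\bigl(e^{t\mathscr{H}}\delta_{x}\bigr)(y)=\bigl(e^{t\mathscr{H}}\mathbf{1}\bigr)(y),
\]
the last equality holding because the kernel of $e^{t\mathscr{H}}$ has absolutely summable rows ($\mathbf{1}$ being the function identically $1$). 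Then $v(t,\cdot):=e^{t\mathscr{H}}\mathbf{1}\in l^{\infty}(\mathbb{Z}^{d})$ solves $\dot v=\mathscr{H}v$, equivalently $\partial_{t}v(t,y)=(\mathscr{A}^{*}v(t,\cdot))(y)+\beta\,\delta_{0}(y)v(t,y)$, with $v(0,\cdot)\equiv1$, which is \eqref{M1.1}; as a byproduct, $M_{\infty,1}(t,y)=\sum_{x}m_{1}(t,y,x)=E\mu_{y,t}$, i.e.\ the duality \eqref{E:Meqm}. I expect the genuine obstacle to be not the algebra but this functional-analytic bookkeeping — tracking which of $l^{1},l^{2},l^{\infty}$ each object lives in, and justifying the interchange of the expectation, resp.\ of $e^{t\mathscr{H}}$, with the countable sum over sites — all of which is valid only once the a priori bound $M_{\infty,1}(t,\cdot)\in l^{\infty}$ has been secured, together with the uniform-in-$y$ control of the $o(h)$ remainders in the first proof.
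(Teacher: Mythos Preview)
Your primary argument---the forward-in-time first-step analysis on $[t,t+h]$, conditioning on the configuration at time $t$ and letting each particle act independently---is exactly the paper's proof; the paper writes the same increment $E[\eta_{t+h}(y)-\eta_{t}(y)]$ as a sum over jumps into $y$, jumps out of $y$, and branching at the source, obtains the same operator $\mathscr{A}^{*}+\beta\Delta_{0}$, and then explicitly defers the justification of the passage to the limit to Lemma~\ref{L1}, just as you do. Your extra care about the a priori bound $M_{\infty,1}(t,\cdot)\in l^{\infty}$ via the subpopulation decomposition is more than the paper provides and is a welcome addition.

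Your alternative route---writing $M_{\infty,1}(t,y)=\sum_{x}M_{1}(t,x,y)=\sum_{x}M_{1}(t,y,x)=(e^{t\mathscr{H}}\mathbf{1})(y)$ and reading off both \eqref{M1.1} and the duality \eqref{E:Meqm} at once---is genuinely different from the paper's proof of this theorem. The paper instead first proves \eqref{M1.1} by the direct computation above, and only afterwards derives \eqref{E:Meqm} by observing that $M_{\infty,1}(t,\cdot)$ and $m_{1}(t,\cdot)$ solve the same linear ODE in $l^{\infty}$ with the same initial data. Your alternative collapses these two steps into one and makes the duality transparent as a consequence of the self-adjointness of $\mathscr{H}$; the paper's route is slightly more elementary in that it never needs to sum the subpopulations explicitly or invoke the semigroup on $\mathbf{1}$, but it pays for this by needing a separate uniqueness argument for the duality.
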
\label{T:osn}
\begin{remark}\label{R:opA}\rm
Let us note that throughout the paper  $a(x,y)\equiv a(y-x)$, and then $\mathscr{A}^{*}=\mathscr{A}$.
\end{remark}
\begin{proof}
Let us estimate $M_{\infty,1}(t+h,y)$ using the formula of total
probability.  The variation of the number of particles at the point $y$ over the
time interval $[t,t+h]$ up to terms of order $o(h)$ is defined by one
of the following possibilities:  (a) particle jump from some point $y'$,
$y'\neq y$, to $y$, (b) particle jump from the point $y$ to another point
and, finally, (c) branching of particles at the point $y$, subject to the
condition $y=0$.  Hence, taking into account the equality $\sum_{n\neq 1}b_{n}=-b_{1}$, we get
\begin{multline*}
M_{\infty,1}(t+h,y)-M_{\infty,1}(t,y)=E[\eta_{t+h}(y)-\eta_{t}(y)]=
\\=E \Bigg[\sum_{y'\ne y} \eta_{t}(y')a(y',y)
h+\eta_{t}(y)a(y,y)h+
\delta_{0}(y)\sum_{n\ne1}\eta_{t}(y)b_{n}(n-1) h+o(h)\Bigg]=
\\=\sum_{y'} M_{\infty,1}(t,y')a(y',y) h+
\delta_{0}(y)M_{\infty,1}(t,y)f'(1)h+o(h)=
\\=\left[\mathscr{A}^{*}M_{\infty,1}(t,y)+\delta_{0}(y)\beta M_{\infty,1}(t,y)
\right]h+o(h).
\end{multline*}
and passing to the limit for $h\to0$ we obtain \eqref{M1.1}.  Since the
legitimacy of the corresponding passage to the limit is explaied along the
same lines as the corresponding place in the proof of Lemma~\ref{L1},
it is omitted here.
\end{proof}

To prove identity~\eqref{E:Meqm} let us introduce notations
\[
u(t)=M_{\infty, 1}(t,\cdot),\quad v(t)=m_{1}(t,\cdot).
\]
Then both functions, $u(t)$ and $v(t)$, treated as functions in $l^{\infty}(\mathbb{Z}^{d})$ satisfy the same initial condition $u(0)=v(0)=1$ or, what is the same, $u(0,\cdot)=v(0,\cdot)\equiv1$.

According to~\cite[Thm.~1.3.1]{YarBRW:r} the function $v(t)$ is a solution of the following linear differential equation in $l^{\infty}(\mathbb{Z}^{d})$:
\begin{equation}\label{E188}
\frac{dx}{dt}=\mathscr{A}x +\Delta_{0}x=\mathscr{H}x,
\end{equation}
whereas according to Theorem~\ref{M:osn} the function $u(t)$ is a solution of the following linear differential equation in $l^{\infty}(\mathbb{Z}^{d})$:
\[
\frac{dx}{dt}=\mathscr{A}^{*}x +\Delta_{0}x=\mathscr{H}^{*}x.
\]

Since by assumption of this paper the linear operator $\mathscr{A}$ is symmetric, that is $\mathscr{A}=\mathscr{A}^{*}$, then the both functions, $u(t)$ and $v(t)$, satisfy the same linear differential equation~\eqref{E188} with bounded linear operator in the right-hand side, and the same initial conditions. Due to unique dependence of solutions of the differential equations on the initial condition, in this case $v(t)\equiv u(t)$ for all $t\ge0$ or, what is the same,
\[
M_{\infty, 1}(t,\cdot) \equiv u(t,\cdot)\equiv v(t,\cdot)\equiv m_{1}(t,\cdot),
\]
which finalize the proof of~\eqref{E:Meqm}.

\begin{remark}\label{RforwardKolmEq}\rm One of possible ways to prove the identity~\eqref{E:Meqm} is to obtain the forward Kolmogorov equation for $F_{\infty}(z;t,y)$, and then by standard arguments to derive the equation for the first moment. However, this way is technically quite cumbersome, so  we preferred to give in Theorem~\ref{M:osn} a direct proof of the forward Kolmogorov equation  for $M_{\infty, 1}(t, y)$.

Nevertheless, for the sake of completeness, we would like to present here without proofs the forward Kolmogorov equations for the generating functions $F_{\infty }(z; t, y) $ and  $F_{1}(z; t, x, y)$:
\begin{align*}
\nonumber\partial_{t}F_{ \infty }(z; t, y) &= (e^{-z}-1)\sum_{y'\neq y}a(y', y)E\left( e^{-z \eta_{t}(y)}\eta_{t}(y')\right)\\
\nonumber&\quad + (e^{-z}-1)a(y, y) E \left(e^{-z(\eta_{t}(y)-1)} \eta_{t}(y)\right) \\
&\quad + \delta_{0}(y)f(e^{-z})E\left( e^{-z(\eta_{t}(y)-1)}\eta_{t}(y)\right),\\
\nonumber\partial_{t}F_{1}(z; t, x, y) &= (e^{-z}-1)\sum_{y'\neq y}a(y', y)E\left( e^{-z \eta_{x,t}(y)}\eta_{x,t}(y')\right)\\
\nonumber&\quad + (e^{-z}-1)a(y, y) E \left(e^{-z(\eta_{x,t}(y)-1)} \eta_{x,t}(y)\right) \\
&\quad + \delta_{0}(y)f(e^{-z})E\left( e^{-z(\eta_{x,t}(y)-1)}\eta_{x,t}(y)\right)
\end{align*}
where $ F_{\infty }(z; 0, y) = e^{-z} $ and $ F_{1}(z; 0, x, y) = e^{-z\delta_{x}(y)}$, respectively.
\end{remark}

\section{The vaccination process}\label{BRWV}

We would like to suggest one of the possible ways to introduce the vaccination process into the defined models of a viral evolution. We describe in terms of branching random walk the viral behavior in the host's population, where individuals are more resistant to epidemic, due to some preventative medical actions.

Usually in mathematical models the vaccination process of SIR and SEIR viruses is described in the terms of the number of humans in the population, who can be infected by the virus, see, e.g.,~\cite{GMS-B:10,BGMS-B:10,RT:15}. Then the vaccination increases the part of carrier's population, who are immune to the disease \cite{RT:15}.  As distinct from  \cite{GMS-B:10}, in this work the process is described at the level of one particle: the vaccine would be presented by reduction of the viral activity.

Return to the model of branching random walk, defined in \cite{YarBRW:r}. At first, for simplicity we consider the infection, where  the viral particle produces no more than two offsprings in time. This model  is rather universal, as due to  the continuity of the process, it is possible to make the temporal interval small enough, such that only one event happens during this time interval.
Thus, the vaccine would be presented by the reduction of the viral replication possibility. As in terms of this work the coefficients $b_n$,  $n>1$, correspond to the virus capacity, we suggest considering a function $\gamma: \mathbb R_+  \to  \mathbb R_+ $ that reduces the values of $b_i$, $i>1$.

In our terms it means that
\[
\gamma (b_0) = b_0, \qquad \gamma(b_n) \leq b_n \quad n>1.
\]
The value of $b_1$ can be found from the expression for coefficients of probability generating function $\sum\limits_{n=0}^{\infty} b_n = 0$.

In this case the most natural way to define $\gamma$ is to multiply $b_2$ by a constant $\alpha$, where $\quad \alpha \in (0,1)$:
$$
\gamma(b_0) = b_0, \qquad \gamma(b_2) = \alpha b_2, \qquad \gamma(b_1) = 1 - b_0 - \alpha b_2.
$$

Despite the universality of this simplest model, it is interesting to generalize the vaccination process expanding it on the case with a finite number of offsprings at a time. We suggest defining the vaccination process in the following way by changing the branching coefficients:
$$\gamma(b_0) = b_0, \qquad \gamma(b_n) = \alpha^{n-1} b_n, \quad  n \geq 2.$$

This suggestion could be explained from the biological point of view by negative influence of the vaccination on the viral productivity. Due to the nature of the vaccination process as preventative actions, we assume, that the intensity of dying of a viral particle stays stable. But the possibility to produce more than one descendent becomes less than without vaccine. So if the host's organism is vaccinated, that is, immune for a disease, the probability for viral particles to produce $n$ descendants tends to zero with increasing of~$n$.

Define new coefficients  $\tilde{b}_n = \gamma(b_n)$.
As earlier, they satisfy following conditions:
$$
\tilde{b}_n > 0, \quad n \neq 1, \qquad \tilde{b}_1<0, \qquad \sum _{n=0}^{\infty} \tilde{b}_n = 0.
$$

In this case the branching process change its transition probabilities \eqref{pn}. At time $t$ the particles out of the source for a short time $h$ act in the same way as in the models without vaccination, while the particles at the source may jump to the point $y\neq 0$ with probability $\tilde{p}(h, x, y) = a(x, y)h + o(h)$, equal to \eqref{E1}, or  produce $n\neq 1$ offsprings with probability $\tilde{p}_*(h, n) = \tilde{b}_nh + o(h) = \alpha^{n-1} b_nh + o(h)$, unlike \eqref{pn}, or die (case with $n=0$) with probability $\tilde{b}_0h + o(h) = b_0h + o(h) $, or remain unchanged with probability
\begin{equation}\label{newph1}
\tilde{p}_*(h, 1)=1 -\sum _{y\in\mathbb{Z}^d, x\neq y} a(x, y)h -\sum_{y\in\mathbb{Z}^d,x\neq y} o(h) - \sum _i \tilde{b}_{i}h - \sum_{n\neq 1}o(h, n).
\end{equation}

The branching process with vaccination is defined by the generating function
$\tilde{f}(u)$, that can be expressed in terms of $f(u)$ defined by~\eqref{pf}, in the following way:
$$
\quad \tilde{f}(u) =  \frac{ f(0)(1-u)(\alpha - 1) + f(\alpha u) - u f(\alpha)}{\alpha}.
$$

Taking into account the following descriptions of $b_n$:
$$
\tilde{b}_0 = b_0, \quad \tilde{b}_n = \alpha^{n-1} b_n, \quad \tilde{b}_1 = -\sum _{n \neq 1} \tilde{b}_n = -\sum _{n \neq 1} \alpha^{n-1} b_n - b_0,
$$
we obtain that the function $\tilde{f}(u)$ is of the form:

\begin{align*}
\tilde{f}(u) &= \sum _{n = 0} \tilde{b}_n u^n = \tilde{b}_0 +  \tilde{b}_1 u + \sum _{n=2} \tilde{b}_n u^n =\\
&= b_0 + ( - \sum _{n=2} \alpha^{n-1} b_n u^n - b_0) u + \sum _{n=2} \alpha^{n-1} b_n u^n =\\
& = b_0 + \alpha^{-1} ( - \sum _{n=0} \alpha^{n} b_n u^n + b_0 + b_1 \alpha) u - b_0 u +  \alpha^{-1}(\sum _{n=0} \alpha^{n} b_n u^n - b_0 - b_1 \alpha u) =\\
& = b_o +  \alpha^{-1}(-f(\alpha) + b_0 + b_1 \alpha)u -b_0u + \alpha^{-1}(f(\alpha u) - b_0 - b_1 \alpha u) = \\
& = b_0 -  \alpha^{-1} f(\alpha) u + b_0  \alpha^{-1} u + b_1u - b_0 u +  \alpha^{-1} f(\alpha u) -  \alpha^{-1} b_0 - b_1 u =\\
& = \frac{ f(0)(1-u)(\alpha - 1) + f(\alpha u) - u f(\alpha)}{\alpha}.
\end{align*}
With this expression for the generating function the transition probabilities $\tilde{p}(h, x, y)$  and $\tilde{p}_*(h, n)$ can be uniquely found. Eventually, the expressions for the Laplace generating functions in terms of  $\tilde{p}(h, x, y)$ and $\tilde{p}_*(h, n)$ are the same as in the models without vaccination and equation \eqref{E1.2} is valid with the new probability generating function $\tilde f(u)$.

\begin{lemma}\label{L4.1} For each $ 0\leqslant z \leqslant \infty $ the generating function $\tilde{F}_{1}(z; t, x, y) $ is continuously differentiable with respect to $t$ uniformly in $x,y\in \mathbb{Z}^d$, satisfies the inequalities $ 0\leqslant  \tilde{F}_{1}(z; t, x, y) \leqslant 1 $ and the differential equations
\begin{align}\label{E4.1}
\partial_{t}\tilde{F}_{1}(z; t, x, y) &= (\mathscr{A}\tilde{F}_{1}(z; t, \cdot, y))(x) + \delta_{0}(x)\tilde{f}(\tilde{F}_{1}(z; t, x, y)),
\end{align}
with the initial condition $ \tilde{F}_{1}(z; 0, x, y) = e^{-z\delta_{x}(y)}$.
\end{lemma}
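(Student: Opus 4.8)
The plan is to repeat essentially verbatim the argument of Lemma~\ref{L1}, since the only change is the replacement of the infinitesimal branching generating function $f$ by $\tilde f$, and the construction of $\tilde f$ was carried out precisely so that all the structural properties used in Lemma~\ref{L1} survive. First I would record the fact, established in the preceding paragraph, that the vaccinated branching mechanism is again a legitimate continuous-time Bienaym\'e--Galton--Watson process: the coefficients $\tilde b_0=b_0\ge 0$, $\tilde b_n=\alpha^{n-1}b_n\ge 0$ for $n\ge 2$, $\tilde b_1<0$, and $\sum_n\tilde b_n=0$, so the transition probabilities $\tilde p_*(h,n)=\tilde b_n h+o(h)$ and $\tilde p_*(h,1)=1+(a(0,0)+\tilde b_1)h+o(h)$ have exactly the same form as \eqref{E:phn}--\eqref{E:ph1}. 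The walk part is untouched, so \eqref{E:phxy}--\eqref{E:phxx} still hold. Also $\tilde f'(1)=\sum_n n\tilde b_n<\infty$ because $\alpha\in(0,1)$ makes the series converge at least as fast as the original one; call this $\tilde\beta$. This is all that is needed to run the proof of Lemma~\ref{L1}.

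Next I would carry out the one-step decomposition over $[t,t+h]$ exactly as in Lemma~\ref{L1}: using the Markov property and conditioning on whether the particle at $x$ jumps, stays put, or (if $x=0$) branches,
\[
\tilde F_{1}(z;t+h,x,y)=\sum_{x'\ne x}p(h,x,x')Ee^{-z\tilde\eta_t(y)}+\delta_0(x)\sum_{n\ne 1}\tilde p_*(h,n)Ee^{-z(\tilde\eta_t^1(y)+\cdots+\tilde\eta_t^n(y))}+[\cdots]\tilde F_1(z;t,x,y),
\]
which after substituting the small-time asymptotics and collecting the remainders $S_1(h),\dots,S_4(h)$ gives, upon dividing by $h$ and letting $h\to 0$, the identity \eqref{E4.1} with $\tilde f$ in place of $f$; the initial condition $\tilde F_1(z;0,x,y)=e^{-z\delta_x(y)}$ is immediate from $\eta_{x,0}(y)=\delta_x(y)$. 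The convergence $S_i(h)/h\to 0$ is justified word for word as in Lemma~\ref{L1}: absolute convergence of $\sum_{x'}a(x,x')$ controls the jump remainders via the splitting at a large radius $K$, and the finiteness of $\sum_n\tilde b_n$ (hence of the tail $\sum_{n>K}\tilde b_n$) controls the branching remainder; the bound $0\le\tilde F_1\le 1$ is just nonnegativity of $\eta_{x,t}(y)$ together with the definition of the Laplace generating function. Finally, the uniform (in $z,x,y$) continuous differentiability in $t$ follows as before: \eqref{E4.1} together with $0\le\tilde F_1\le 1$ and the boundedness on $l^\infty(\mathbb Z^d)$ of $\mathscr A$ and $\Delta_0$ (Schur's lemma) shows the $t$-derivative is bounded and continuous uniformly in the parameters~\cite{Hartman82}.

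I do not expect any genuine obstacle here; the lemma is a routine transcription. The only point that deserves a sentence of care is checking that $\tilde f$ inherits the hypotheses on $f$ used in Lemma~\ref{L1} (nonnegativity of $\tilde b_n$ for $n\ne 1$, $\sum_n\tilde b_n=0$, and finiteness of $\tilde\beta=\tilde f'(1)$); once that is observed, the proof of Lemma~\ref{L1}---including the $\epsilon$/$K$ bookkeeping for the limit passage---applies with no modification, and it suffices to say so rather than repeat it.
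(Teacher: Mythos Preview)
Your proposal is correct and matches the paper's own treatment exactly: the paper simply states that the proof ``verbatim repeats the proof of Lemma~\ref{L1} and so is omitted.'' Your additional care in verifying that $\tilde f$ inherits the hypotheses on $f$ (nonnegativity of $\tilde b_n$ for $n\neq 1$, $\sum_n\tilde b_n=0$, and finiteness of $\tilde\beta$) is a sensible elaboration, but the core argument is identical.
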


The proof of this lemma verbatim repeats the proof of Lemma~\ref{L1} and so is omitted.
According to Lemma~\ref{L4.1} the function $\tilde{F}_{1}(z; t, x, y)$ in the described model with vaccination satisfies equation~\eqref{E4.1}  which is similar to that used in~\cite{YarBRW:r} but with another probability generation function $\tilde{f}$. The next step of research is to derive the equations for the first and the higher moments of BRWs and to find how the limit theorems will change depending on the value of the coefficient $\alpha$.

The presented  vaccination process changes only the properties of the source, while the transition mechanisms of particles remain intact. Consequently it is suitable to the both models of BRWs, mentioned in the work.

The only changes at the branching source in the model with infinite number of initial particles could be explained by the fact, that the vaccination of carriers is applied only in the focus of outbreak, what can possibly fits with real data. Under the assumption that the proposed scheme of vaccination action  corresponds to the real viral evolution, the prediction of epidemic extension, dependent on the vaccine success presented by $\alpha$,  becomes possible.

\textbf{Acknowledgements.} The research was supported by the Russian Foundation for
Basic Research, project no. 17-01-00468.


\begin{thebibliography}{99}
\bibitem{YarBRW:r} Yarovaya, E. \textit{Branching random walks in non-homogenious environment.} Moscow, Center of Applied Investigations of the Faculty of Mechanics and Mathematics of the Moscow State University, 2007.
\bibitem{Gih} Gihman, I.I., Skorohod, A.V. \textit{The theory of stochastic processes.} Vol. II. Translated from the Russian by Samuel Kotz. Die Grundlehren der Mathematischen Wissenschaften, Band 218. Springer-Verlag: New York-Heidelberg, 1975.
\bibitem{Dal} Daletskii, Y.L., Krein,  M.G. \textit{Stability of solutions of differential equations in  Banach space.}  Translated from the Russian by S. Smith. Translations of Mathematical Monographs, Vol. 43. American Mathematical Society: Providence, R.I., 1974.
\bibitem{Sev} Sevastyanov, B.A.  \textit{Branching processes.} M.: Nauka, 1971.
\bibitem{Fel} Feller, W. \textit{An Introduction to Probability Theory and Its Applications.} Vol. II. Second edition, John Wiley \&{} Sons, Inc.: New York-London-Sydney, 1971.
\bibitem {GMS-B:10} Gonz\'{a}lez, M., Mart\'{\i}nez, R. and Slavtchova-Bojkova, M.  {\it Stochastic monotonicity and continuity properties of the extinction time of Bellman-Harris branching processes: an application to epidemic modelling} . J. App. Prob., 2010,  47(1), 58--71.
\bibitem {BGMS-B:10}  Ball, F., Gonz\'{a}lez, M., Mart\'{\i}nez, R. and Slavtchova-Bojkova, M. {\it Stochastic monotonicity and continuity properties of functions defined on Crump-Mode-Jagers branching processes, with application to vaccination in epidemic modelling.} Bernoulli,  2014, 20(4), 2076--2101.
\bibitem {Hartman82}
Hartman, P.  \textit{Ordinary differential equations}. Reprint of the second edition. Boston, Mass.: Birkhäuser, 1982.
\bibitem{FF:14} Antonelly, F., Bosco, F., Castro, D., Janini, L. {\it Viral evolution and adaptation as a multivariate branching process}. BIOMAT 2012, 217--243, World Sci. Publ.: Hackensack, NJ, 2013.
\bibitem{W:14} Wilke, C.  {\it Probability of fixation of an advantageous mutant in a viral quasispecies}. Genetics, 2003, 163, 457--466.
\bibitem{M:00} Mishkov, R. {\it Generalization of the formula of Faà di Bruno for a composite function
with vector argument}, Internat. J. Math., 2000, 24(7), 481–-491.
\bibitem{RG:17} Randall, R., Griffin, D.  {\it Within host RNA virus persistence: Mechanisms and consequences.} Curr. Opin. Virol.,  2017,  23, 35–-42.
\bibitem{BAP:96} Boldogh, I., Albrecht, T., Porter, D. {\it Persistent Viral Infections}.  Baron S, editor. Medical Microbiology. 4th edition. Galveston (TX): University of Texas Medical Branch at Galveston, Chapter 46, 1996.
\bibitem{VC:04} Vetsika, E., Callan, M.  {\it Infectious mononucleosis and Epstein-Barr virus}. Expert Rev. Mol. Med. 2004, 6, 1–-16.
\bibitem{CMLH:11} Clokie, M., Millard, A., Letarov, A., Heaphy, S.  {\it Phages in nature. Bacteriophage}, 2011, 1, 31-–45.
\bibitem{B:96} Butel J.  {\it Papovaviruses}.  Baron S, editor. Medical Microbiology. 4th edition. Galveston (TX): University of Texas Medical Branch at Galveston, Chapter 66, 1996.
\bibitem{RT:15} Rachah, A., Torres, D.  {\it Mathematical Modelling, Simulation, and Optimal Control of the 2014 Ebola Outbreak in West Africa}. Discrete Dynamics in Nature and Society, 2015, Art. ID 842792.
\end{thebibliography}
\end{document}